\def\D{\mathcal{D}}
\def\pa{\mathrm{pa}}
\def\fa{\mathrm{fa}}
\def\pr{\mathrm{pr}}
\def\G{\mathcal{G}}
\theoremstyle{plain}
\newtheorem{theorem}{Theorem}[section]
\newtheorem{corollary}{Corollary}[section]
\newtheorem{lemma}{Lemma}[section] 
\newtheorem{proposition}{Proposition}[section]
\theoremstyle{definition}
\newtheorem{definition}{Definition}[section]
\theoremstyle{remark}
\numberwithin{equation}{section}
\newtheorem{Ex}{Example}[section]
\newtheorem{Rem}{Remark}[section]
\newcommand{\R}{\mathbb{R}}
\title{Positive definite completion problems for directed acyclic graphs}
\author{Emanuel Ben-David\\ {\small Stanford University} \and Bala Rajaratnam\\ {\small Stanford University}}
\date{}
\begin{document}

\maketitle

\begin{abstract}
A positive definite completion problem pertains to determining whether the unspecified positions of a partial (or incomplete) matrix can be completed in a desired subclass of positive definite matrices. In this paper we study an important and new class of positive definite completion problems where the desired subclasses are the spaces of covariance and inverse-covariance matrices of probabilistic models corresponding to directed acyclic graph models (also known as Bayesian networks). We provide fast procedures that determine whether a partial matrix can be completed in either of these spaces and thereafter proceed to construct the completed matrices. We prove an analog of the positive definite completion result for undirected graphs in the context of directed acyclic graphs, and thus proceed to characterize the class of DAGs which can always be completed. We also proceed to give closed form expressions for the inverse and the determinant of a completed matrix as a function of only the elements of the corresponding partial matrix.    
\end{abstract}

\noindent {\it Key words:} Directed acyclic graph, Partial matrices, Positive definite matrices, Positive definite completion, Cholesky decomposition, Perfect Graph, Decomposable graph.

\vspace{0.2cm}

\noindent {\it AMS 2000 subject classifications:} 15B48, 15B57, 15B99, 05C50, 05C20, 05C17.

\section{Introduction}
\label{sec:intro}
A pattern of a given $n\times n$  matrix is defined to be a subset of $\left\{\{ i,j\}: \: 1\leq i,j\leq n\right\}$, i.e., a set of positions in the matrix in which the entries are present. A (symmetric) partial matrix specified by a pattern is an $n\times n$ symmetric matrix in which the entries corresponding to the positions listed in the pattern are specified, but the rest of the entries are unspecified and thus free to be chosen. For example
\[
\begin{pmatrix}
1& ?& ?& -1\\
?&-2&2&?\\
?&2&?&?\\
-1&?&?&1/2
\end{pmatrix}
\] 
is a $4\times 4$ partial matrix specified by the pattern 
\[
\left\{ \{ 1,1\},\{2 ,2\} , \{ 4,4\}, \{ 1,4\}, \{ 2,3\}  \right\}.
\]

  A matrix completion problem asks whether for a given pattern the unspecified entries of each partial matrix can be chosen in such a way that the resulting conventional matrix  is of a desired type. Recent literature in general, and in particular matrix theory, computer science, statistics and signal processing have studied a variety of matrix completion problems, such as positive definite completion \cite{Grone1984}, low-rank completion \cite{Candes2009} and singular value completion \cite{Cai2010}. \\
  
 The positive definite completion problem, one of the most well studied matrix completion problems, asks which partial matrices have positive definite completions, with or without additional features. It is clear that if a partial matrix has a positive definite completion, then it must be partial positive definite, i.e., each fully specified principal submatrix is positive definite. \\
 
 The work of Grone et al. \cite{Grone1984} is one of the important contributions in the area of positive definite completion. The authors prove that every partial positive matrix corresponding to a given pattern has a positive definite completion if and only if the specified pattern, considered as a set of edges, forms a chordal (or equivalently
decomposable) graph. A chordal graph is an undirected graph that has no induced cycle of length greater than or equal to 4. Although a positive definite completion is not necessarily unique, they furthermore prove that a positive definite completion  matrix $\Sigma$  is unique if one requires that $\Sigma_{ij}^{-1}=0$  for each unspecified position $\{i,j\}$. Interestingly, such a positive definite completion for $\Sigma$ arose in an earlier paper \cite{Dempster1972} by Dempster within the context of maximum likelihood estimation for Gaussian graphical models. In light of advances in the area of graphical models in recent years, the connection between positive definite completion problems and probabilistic models corresponding to undirected graphs has been thoroughly exploited (see for example in \cite{Dawid1993} \cite{Roverato2000}, \cite{Letac2007}, \cite{Rajarat2008}, \cite{Khare2011}). \\
 
In this paper we study a new class of positive definite completion problems that corresponds to probabilistic models over directed acyclic graphs, abbreviated DAG models henceforth. DAG models, better known as Bayesian networks, are arguably one of the most widely used classes of graphical models. The need for studying this new class of problems naturally arises when studying spaces of covariance and inverse covariance matrices corresponding to DAG models \cite{Bendavid2011}. These spaces are essential features of a DAG model. In the DAG setting, we consider specific positive definite completions of partial matrices that are specified by a pattern determined by the edges of a directed acyclic graph $\D$. Here the partial matrices are desired to be completed in the space of covariance or inverse-covariance matrices corresponding to DAG models. \\
 
 A great advantage of positive definite completion problems for DAGs, as we shall determine in this paper, is that we are able to present fast decision procedures to not only determine whether a given partial matrix can be completed in either of the aforementioned spaces above, but we are also able to fully construct the completed matrices. This highlights the tractable nature of completion problems for DAGs as compared with those of undirected graphs, where no such completion  procedures are known to exist, unless the graph is decomposable. Even in case of a decomposable undirected graph $\G$, we shall see that the same positive definite completion can be achieved under a directed version $\D$ of $\G$. In this sense our result aims to generalize the existing results for completing partial positive definite matrices corresponding to undirected graphs.\\
    
 The organization of the paper is as follows. In \S \ref{sec:pre} we briefly review the basics of  graphical models and in particular Gaussian graphical models for both undirected and directed graphs. In \S \ref{sec:pdcp} first we formally define two types of positive definite completion problems  for DAGs and then, among some other results, we present two fast procedures of polynomial complexity that determine whether a partial matrix can be completed in the desired space, and specify a way to uniquely construct the completed matrix. The uniqueness of this completion has tremendous benefits for Bayesian analysis of DAG models (see \cite{Bendavid2011} for more details). In \S \ref{sec:gen} we prove an analog of Grone et al.'s \cite{Grone1984} theorem  but in the context of DAGs, and also demonstrate subtle differences between the completion problems for DAGs vs. undirected graphs. In \S \ref{sec:det_inv} we provide expressions for directly computing the inverse and the determinant of the positive definite completion of a partial matrix, without actually carrying out the completion. 

 \section{Preliminaries}
 \label{sec:pre}
\subsection{Graph theoretic notation and terminology}\label{subsec:graph} A graph $\G$ is a pair of objects $(V, \mathscr{V})$, where $V$ and $\mathscr{V}$ are two disjoint finite sets representing,
respectively, the vertices and the edges of $\G$. Each edge $e\in \mathscr{V}$ is either an ordered pair $(\nu,\nu')$ or an unordered pair $\{ \nu,\nu'\}$, for some $\nu,\nu'\in V$. An edge $(\nu,\nu')\in \mathscr{V}$ is called directed where $\nu$ is said to be a parent of $\nu'$, and $\nu'$ is said to be a child of $\nu$. We write this as $\nu\rightarrow \nu'$. The set of parents of $\nu$ is denoted by $\mathrm{pa}(\nu)$, and the set of children of $\nu$ is denoted by $\mathrm{ch}(\nu)$. The family of $\nu$ is $\mathrm{fa}(\nu)=\mathrm{pa}(\nu)\cup \{\nu\}$. An edge $\{\nu,\nu'\}\in \mathscr{V}$ is called undirected where $\nu$ is said to be a neighbor of $\nu'$, or $\nu'$ a neighbor of $\nu$. We write this $\nu\sim_{\G}\nu'$. The set of all neighbors of $\nu$ is denoted by $\mathrm{ne}(\nu)$. We say $\nu$ and $\nu'$ are adjacent if there exists  either a directed or an undirected edge between them. The boundary of $\nu$, denoted by $\mathrm{bd}(\nu)$, is the union of parents and neighbors of $\nu$. A loop in $\G$ is an ordered pair $(\nu,\nu)$, or an unordered pair $\{\nu,\nu\}$ in $\mathscr{V}$. For ease of notation, in this paper we always shall assume that the edge set $\mathscr{V}$ contains all the loops, although we shall draw the respective graphs without the loops. \\

\indent An undirected graph is a graph with all of its edges undirected, whereas a directed graph, ``digraph", is a graph with all of its edges directed. In this section, we shall use the symbol $\G$ to denote a general graph, and make clear within the context in which it is used, whether $\G$ is  directed or undirected.\\

\indent We say that the graph $\mathcal{G}'=(V', \mathscr{V}')$ is a subgraph of  $\mathcal{G}=(V, \mathscr{V})$,  denoted by  $\mathcal{G}'\subseteq \mathcal{G}$,  if  $V'\subseteq V$  and  $ \mathscr{V}'\subseteq  \mathscr{V}$. In addition, if  $\mathcal{G}'\subseteq \mathcal{G}$  and  $ \mathscr{V}'=V'\times V'\cap  \mathscr{V}$, we say that $\mathcal{G}'$ is an {induced}  subgraph of  $\mathcal{G}$. We shall consider only induced subgraphs in what follows. For a subset  $A\subseteq V$, the induced subgraph  $\mathcal{G}_A=(A, A\times A\cap  \mathscr{V})$ is said to be the graph {induced} by $A$. A graph  $\mathcal{G}$  is called {complete} if every pair of vertices are adjacent. A  clique  of  $\mathcal{G}$  is an induced complete subgraph  of  $\mathcal{G}$ that is not a subset of any other induced complete subgraphs  of  $\mathcal{G}$. More simply,  a subset  $A\subseteq V$  is  called a clique if the induced subgraph  $\mathcal{G}_A$  is a clique of $\mathcal{G}$. The set of the cliques of $\G$ is denoted by $\mathscr{C}_{\G}$.\\

A path in $\G$ of length $n\geq 1$ from a vertex $\nu$ to a vertex $\nu'$ is a finite sequence of distinct vertices $\nu_{0}=\nu,\ldots, \nu_{n}=\nu'$ in $V$ such that $(v_{k-1}, v_k)$  or $\{ v_{k-1}, v_k\}$ are in $\mathscr{V}$ for each $k=1,\ldots, n$. We say that the path is {directed} if at least one of the edges is directed. We say  $v$  {leads} to $v'$, denoted by  $v\longmapsto v'$, if there is a directed path from  $v$  to  $v'$. A graph $\mathcal{G}=(V, \mathscr{V})$ is called connected if for any pair of distinct vertices $v, v'\in V$ there exists a path between them. An  $n$-{cycle} in  $\mathcal{G}$  is a path of length  $n$  with the additional requirement that the end points are identical. A {directed}  $n$-cycle is defined accordingly. A graph is {acyclic} if it does not have any cycles. An {acyclic directed  graph}, denoted by  DAG, is a directed graph with no cycles of length greater than 1.\\

\indent The undirected version of a graph  $\mathcal{G}=(V, \mathscr{V})$, denoted by $\mathcal{G}^{\mathrm{u}}$,  is the undirected graph obtained by replacing all the directed edges of  $\mathcal{G}$  by undirected ones. An immorality in a directed  graph  $\mathcal{G}$  is  an induced subgraph of  the form  $i\longrightarrow k \longleftarrow j$.  Moralizing an immorality entails adding an undirected edge between the pair of parents that have the same children. Then the moral graph of  $\mathcal{G}$, denoted by  $\mathcal{G}^{\mathrm{m}}$, is the undirected graph obtained by first  {moralizing} each  immorality of  $\mathcal{G}$  and then making the undirected version of the resulting graph. Naturally there are DAGs which have no immoralities and this leads to the following definition.

\begin{definition}
A DAG   $\mathcal{G}$ is said to be ``perfect" if it has no immoralities; i.e., the parents of all vertices are adjacent, or equivalently if the set of parents of each vertex induces a complete subgraph of  $\mathcal{G}$ .
\end{definition}

\indent Given a directed acyclic graph (DAG), the set of {ancestors} of a vertex $v$, denoted by $\mathrm{an}(v)$, is the set of those vertices $v''$  such that $v''\longmapsto v$. Similarly, the set of  {descendants}  of  a vertex $v$, denoted by  $\mathrm{de}(v)$, is the set of  those  vertices  $v'$  such that  $v\longmapsto v'$.  The   set of {non-descendants } of  $v$  is  $\mathrm{nd}(v)=V\setminus\left(\mathrm{de}(v)\cup \{v\}\right)$.

An undirected graph $\mathcal{G}$ is said to be decomposable if no induced subgraph contains  a cycle of length greater than or equal to four. A constructive definition in terms of the cliques and the separators of the graph $\G$ can also be specified. The reader is referred to Lauritzen \cite{Lauritzen1996}  for all the common notions of decomposable graphs that we will use here. 
\noindent
Decomposable (undirected) graphs and (directed) perfect graphs have a deep connection. In particular, it can be shown \cite{Golumbic2004, Lauritzen1996} that if $\G$ is decomposable, then there exists a directed version of $\G$, i.e., a digraph $\D$ such that $\D^{\mathrm{u}}=\G$, where $\D$ is a perfect DAG.

\subsection{Graphical Gaussian models} 
A graphical model over a graph $\G$ is a family of probability distributions on a common probability space such that each distribution satisfies the set of conditional independences described by $\G$. Two important classes of graphical models are Markov random fields (or undirected graphical models)  and  Bayesian networks (or directed graphical models). Henceforth in this paper, we shall assume that $\G=(V, \mathscr{V})$ is an undirected graph and $\D=(V, \mathscr{E})$ is a directed acyclic graph (DAG), both with the same vertex set  $V=\{1,\ldots, p\}$. A random vector
$\mathbf{X}=(X_{1},\ldots, X_{p})\in \R^{p}$ belongs to a Markov random field over $\G$ if it satisfies the pairwise Markov property\footnote{More precisely, a  Markov random field is a probability distribution that satisfies the local Markov property, which is in general a stronger property than the pairwise Markov property. However, when $P$ has a positive density w.r.t. a Borel measure these two properties are equivalent.} w.r.t.  $\G$, i.e.,
\begin{equation}\label{eq:GM}
(i,j)\notin\mathscr{V}\Longrightarrow X_{i}\Perp X_{j}| \left(X_{k}: k\in V\setminus\{ i,j\}\right).
 \end{equation}
For a set  $A\subseteq V$ let  $\mathbf{X}_{A}=(X_{i}: \; i\in A)$.  The random vector  $\mathbf{X}$  belongs to  a Bayesian network over  a DAG   $\D$  if it satisfies the directed local  Markov property w.r.t.  $\D$, i.e.,
\begin{equation}
X_{i}\Perp \mathbf{X}_{\mathrm{nd}(i)}| \mathbf{X}_{\mathrm{pa}(i)}\quad\forall i\in V.
\end{equation}
\begin{figure}[ht]
\centering
\subfigure[]{
\includegraphics[width=2.4cm]{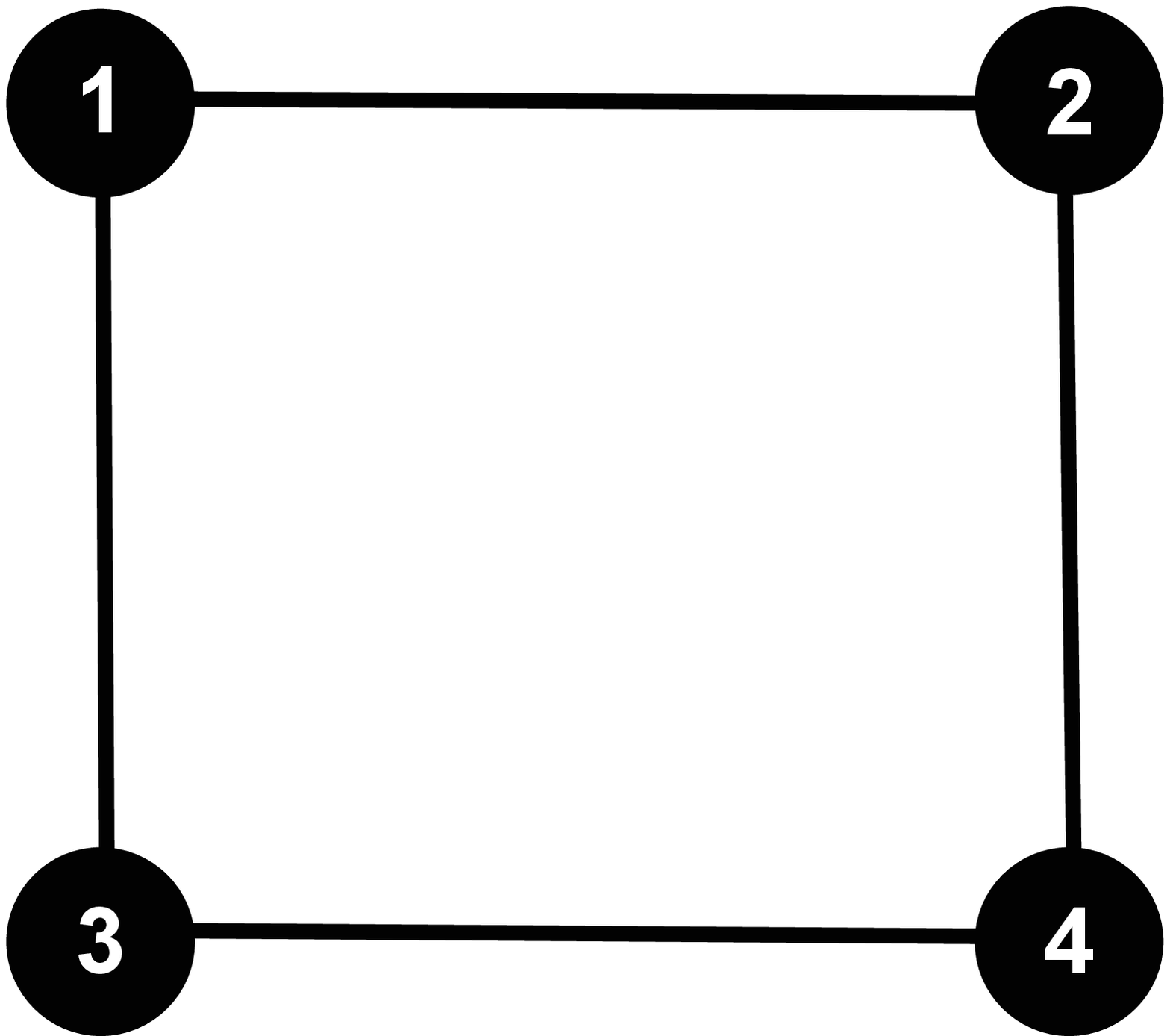}
\label{fig:C4}
}
\hspace{3cm}
\subfigure[]{
\includegraphics[width=2.4cm]{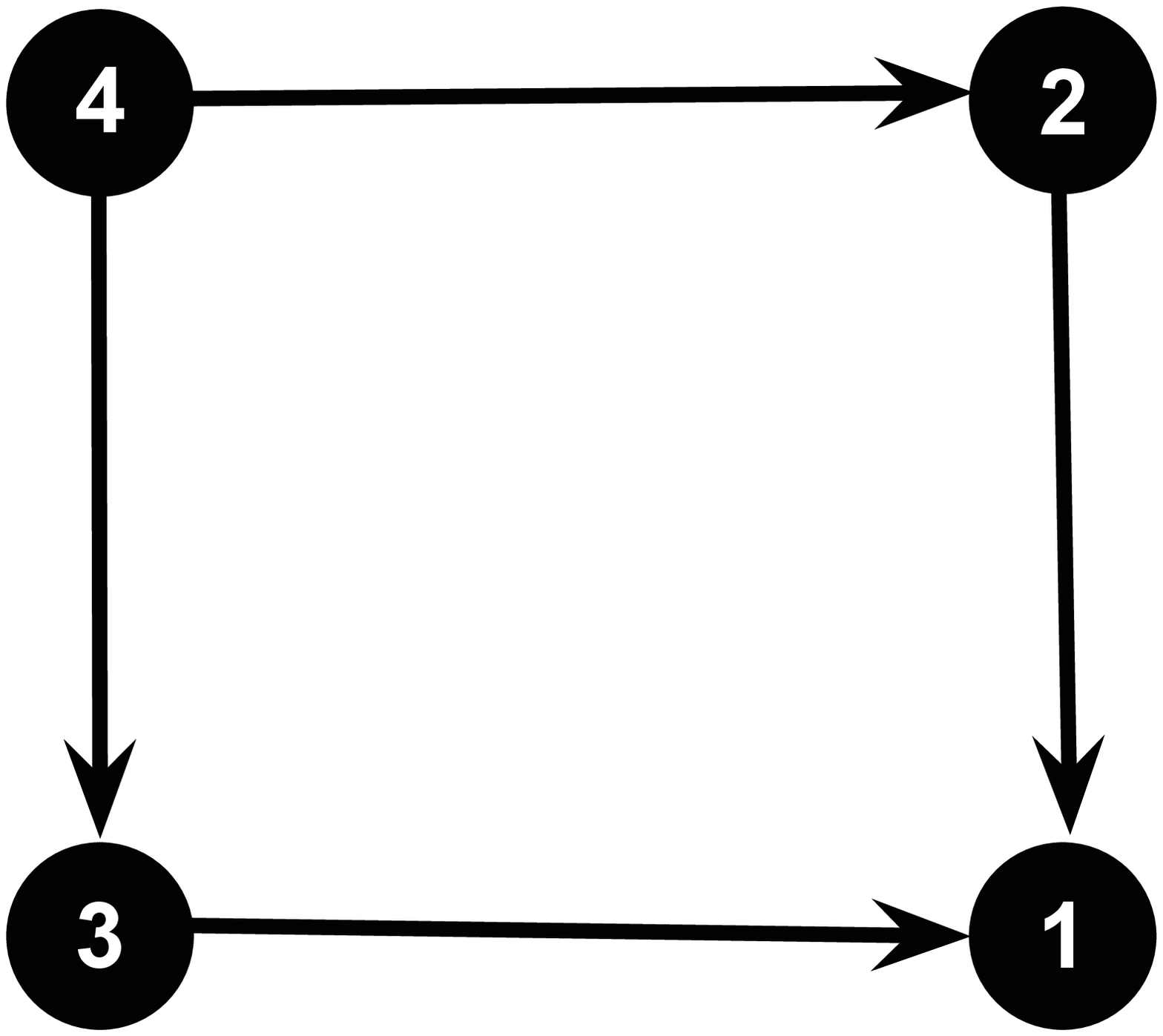}
\label{fig:ddC4}
}
\caption{ An undirected four cycle and a directed version. }
\end{figure}
We now give examples of such models.

\begin{Ex}\
\begin{itemize}
\renewcommand{\labelitemi}{$(a)$}
\item Let us consider the four cycle $C_{4}$ in Figure \ref{fig:C4}. Then a random vector $(X_{1},\ldots, X_{4})\in \R^{4}$ satisfies the global Markov property w.r.t. $C_{4}$ implies
\[
X_{1}\Perp X_{4}|( X_{2}, X_{3})\quad\text{and}\quad X_{2}\Perp X_{3}| (X_{1}, X_{4}).
\]
\renewcommand{\labelitemi}{$(b)$}
\item Let us consider the DAG  $\D$ given in Figure \ref{fig:ddC4}. Note that $\D$ is a directed version of $C_{4}$. Now a random vector  $(X_{1},\ldots, X_{4})$ satisfies the directed Markov property w.r.t. $\D$ implies
\[
\quad X_{1}\Perp X_{4}|(X_{2}, X_{3}) \quad\text{and} \quad X_{2}\Perp X_{3}| X_{4}.
\]
\end{itemize}
\end{Ex}

Important subclasses of the models defined above arise when $\mathbf{X}$ is multivariate Gaussian; namely the Gaussian Markov random field over $\G$ and the directed Gaussian random field over $\D$, denoted by $\mathscr{N}(\G)$  and   $\mathscr{N}(\D)$, respectively. More precisely, $\mathscr{N}(\G)$  denotes the family of multivariate normal distributions $\mathrm{N}_{p}(\mu,\Sigma)$,  $\mu\in \R^{p}$,  $\Sigma\in\mathrm{PD}_{p}(\R)$ (abbreviated  $\Sigma\succ 0$ henceforth), that obey the local Markov property w.r.t. $\G$. The family of distributions $\mathscr{N}(\G)$ is often referred to as the Gaussian graphical or Gaussian undirected model over $\G$. Likewise, $\mathscr{N}(\D)$ denotes the family of  multivariate normal distributions $\mathrm{N}_{p}(\mu,\Sigma)$ that obey the directed local Markov property w.r.t. $\D$. We shall refer to $\mathscr{N}(\D)$ as the Gaussian directed acyclic graph or DAG model over $\D$. It turns out that in both of these Gaussian graphical models, the required Markov property is reflected in $\Sigma$ in terms of  certain algebraic equations of the entries of $\Sigma$ depending on the structure of the underlying graph. The description of the equations  in  $\mathscr{N}(\G)$  is quite simple. We have  $\mathrm{N}_{p}(\mu, \Sigma)\in \mathscr{N}(\G)$ if and only if
\begin{equation}\label{eq:graphical_model_prop}
\{i,j\}\notin \mathscr{V}\Longrightarrow \Sigma_{ij}^{-1}=0.
\end{equation}
In order to express the equations satisfied by  $\Sigma$  when  $\mathrm{N}_{p}(\mu, \Sigma)\in \mathscr{N}(\D)$,  first we establish a few (and to some degree standard) notations for DAG models \cite{Andersson1998, Bendavid2011}. \\

\noindent
First note that the relation $\nu\rightarrow \nu'$ defines a partial order on the  vertex set  of  $\D$. Since any partial order can be extended to a total order, we can therefore assume without loss of generality that the vertices are numbered in such a way that $i\rightarrow j$ implies that $i>j$, $\forall i,j \in V$. By this convention a random vector  $\mathbf{X}\in \R^{p}$ obeys the directed local Markov property w.r.t. $\D$ (or more precisely an equivalent version of it called the ordered Markov property - see \cite{Cowell99} for details) if  and only if   
\begin{equation}\label{eq:dlmp}
X_{j}\Perp \mathbf{X}_{\pr(j)}|\mathbf{X}_{\pa(j)} \:\: \forall j\in V,
\end{equation}
where  $\pr(j) =\left\{ i : i>j  \: ,\:  i\notin \pa(j)\right\}$ is called the set of predecessors of $j$. \\

\noindent
 Now for $A, B \subseteq  V$ let  $\Sigma_{A, B}$ denote the $|A|\times |B|$ submatrix of $\Sigma$ with rows indexed by $A$ and columns indexed by $B$. We often write $\Sigma_{A}$ for the principal submatrix $\Sigma_{A, A}$,  $\Sigma_{A, j}$ for $\Sigma_{A, \{ j\}}$ and similarly $\Sigma_{j, B}$ for $\Sigma_{\{ j\}, B}$.  By a result in \cite{Andersson1998} $\mathrm{N}_{p}(\mu,\Sigma)\in \mathscr{N}(\D)$ if and only if  $\Sigma\succ 0$\;  and
 \begin{equation}\label{eq:DL_normal}
 \Sigma_{\pr(j), j}=\Sigma_{\pr(j),\pa(j)}(\Sigma_{\pa(j)})^{-1}\Sigma_{\pa( j), j}\quad\forall j\in V,
 \end{equation}
Note that if $\mathrm{pa}(j)=\emptyset$, then Equation \eqref{eq:dlmp} implies that  $X_{j}\Perp \mathbf{X}_{\pr( j)}$. In particular,  this implies that  $\Sigma_{\pr(j), j}=0$. To include this situation in Equation  \eqref{eq:DL_normal} we shall use the convention $\Sigma_{\pa(j)}=1$ and $\Sigma_{\pa(j), j}=0$. Equation \eqref{eq:DL_normal} also illustrates how the directed local Markov property is reflected in the entries of $\Sigma$ in terms of algebraic equations. We now proceed to define the Schur complement of a symmetric positive definite matrix. Consider a $p \times p$ symmetric matrix $M$ partitioned as follows:
\[
M = \left(\begin{array}{cc}M_I & M_{IJ} \\ M_{JI} & M_J\end{array}\right)
\]
where $\{I, J\}$ is a partition of $\{1,2,\ldots,p\}$. Note that the matrix $M$ is positive definite if and only if $M_I$ is positive definite and $M_{J|I} = M_J - M_{JI}M_{I}^{-1}M_{IJ}$ is positive definite. The matrix $M_{J|I}$ is called the \emph{Schur complement} of $M_I$ in $M$. 

\section{The positive definite completion problem for DAGs}\label{sec:pdcp}
 In this section we propose two polynomial time procedures involving rational functions for completing partial positive definite matrices  to positive definite matrices that correspond to Gaussian Bayesian networks  $\mathscr{N}(\D)$. In addition, these completion problems, as we will explain later, are also generalizations of the classical positive definite problem in \cite{Grone1984}. To formalize the completion problem that we discuss in this paper, we introduce some definitions and notation.

\subsection{Preliminaries}
 Let $\D=(V, \mathscr{E})$ be a DAG. A $\D$-partial matrix  is a symmetric function
 \[
 \Gamma:\left\{(i,j): \{i,j\}\in \mathscr{V}\right\}\rightarrow \R\: \: \text{such that}\:\:  \Gamma_{ij}=\Gamma((i,j))=\Gamma_{ji}\quad\forall \{i,j\}\in\mathscr{V},
 \]
 where $\mathscr{V}$ is the edge set of the undirected version of the DAG $\mathcal{D}$.
 The set of all $\D$-partial matrices, denoted by $\mathrm{I}_{\D}$, is a real linear space of dimension $|\mathscr{E}|$. Recall that $\mathscr{C}_{\D}$ denotes the set of cliques of ${\D}$. Now for each clique $C\in \mathscr{C}_{\D}$, the restriction of  $\Gamma$ to  $C$, denoted by  $\Gamma_{C}$, is a $|C|\times |C|$ matrix  $(\Gamma_{ij})_{i,j\in C}$.  A partial positive definite matrix over  $\D$ is a $\D$-partial matrix  $\Gamma$  such that  $\Gamma_{C}\succ 0$  for each  $C\in\mathscr{C}_{\D}$. The set of all partial positive definite matrices over  $\D$  is denoted by  $\mathrm{Q}_{\D}$. A partial matrix over an undirected graph $\G$, or a partial positive definite matrix over an undirected graph $\G$, can be similarly defined. Next we define two sets; the set of  covariance matrices and the set of inverse-covariance matrices corresponding to a Gaussian Bayesian network  $\mathscr{N}(\D)$. More precisely, these spaces are, respectively: 
  \[
  \mathrm{PD}_{\D}=\left\{\Sigma:  \mathrm{N}_{p}(0,\Sigma)\in \mathscr{N}(\D)\right\}\quad\text{and}\quad\mathrm{P}_{\D}=\left\{\Omega :  \Omega^{-1}\in \mathrm{PD}_{\D}\right\}.
  \]
 Similarly, for an undirected graph $\G=(V, \mathscr{V})$ we define 
 \[
 \mathrm{PD}_{\G}=\left\{\Sigma:  \mathrm{N}_{p}(0,\Sigma)\in \mathscr{N}(\G)\right\}\quad\text{and}\quad\mathrm{P}_{\G}=\left\{\Omega :  \Omega^{-1}\in \mathrm{PD}_{\G}\right\}.
 \]
 \begin{Rem}
 Note that by Equation \eqref{eq:graphical_model_prop}  $\Sigma\in \mathrm{PD}_{\G}$ if and only if $\Sigma_{ij}^{-1}=0$ whenever $\{i,j\}\notin \mathscr{V}$.
 \end{Rem}
 A characterizing feature of a Gaussian Bayesian network  $\mathscr{N}(\D)$ is that the  structure of the underlying DAG (i.e., the graph itself), in terms of the missing arrows, can be fully recovered from the lower triangular matrix in the Cholesky decomposition of  $\Omega = \Sigma ^{-1}\in \mathrm{P}_{\D}$. The following remark formalizes this fact.
   
\begin{Rem}\label{rem:chol_decom_PD}
    Let $\mathcal{L}_{\D}$  denote the linear space  of all lower triangular matrices with unit diagonal entries such that
   \[
   L\in \mathcal{L}_{\D} \Longrightarrow L_{ij}=0 \:\:\text{for each}\:\; (i,j)\notin\mathscr{E}.
   \]
    Then  $\Omega\in\mathrm{P}_{\D}$ if and only if  there exists a lower triangular matrix  $L\in\mathcal{L}_{\D}$ and a diagonal matrix  $\Lambda$, with strictly positive diagonal entries, such that in the modified Cholesky decomposition  $\Omega=L\Lambda L'$ \cite{Wermuth1980, Andersson1998, Bendavid2011}. In addition, the modified Cholesky decomposition is unique.
    \end{Rem} 
We now discuss the relationships between the spaces $\mathrm{PD}_{\D}$ (or $\mathrm{P}_{\D}$) and its undirected counterpart $\mathrm{PD}_{\D^{\mathrm{u}}}$ (or $\mathrm{P}_{\D^{\mathrm{u}}}$). In particular, the modified Cholesky decomposition property of $\Omega\in \mathrm{P}_{\D}$ in Remark \ref{rem:chol_decom_PD} above implies the following.
\begin{lemma}[Wermuth \cite{Wermuth1980}]\label{lem:wermuth}
Suppose  $\D$  is an arbitrary DAG.  Then $\mathrm{PD}_{\D}\subseteq \mathrm{PD}_{\D^{\mathrm{m}}}$, where the undirected graph $\D^{\mathrm{m}}$ is the moral graph of $\D$.
\end{lemma}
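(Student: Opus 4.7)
The plan is to exploit the modified Cholesky decomposition of $\Omega = \Sigma^{-1}$ guaranteed by Remark \ref{rem:chol_decom_PD}, combined with the characterization recorded in Equation \eqref{eq:graphical_model_prop} (and the remark following it) that membership in $\mathrm{PD}_{\D^{\mathrm{m}}}$ is equivalent to $\Omega$ vanishing off the edge set of $\D^{\mathrm{m}}$. So it suffices to prove that for any $\Sigma\in\mathrm{PD}_{\D}$ and any distinct vertices $i,j$ with $\{i,j\}$ not an edge of $\D^{\mathrm{m}}$, one has $(\Sigma^{-1})_{ij}=0$.

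To do this, I would fix $\Sigma\in\mathrm{PD}_{\D}$ and write $\Omega=\Sigma^{-1}=L\Lambda L'$ with $L\in\mathcal{L}_{\D}$ and $\Lambda$ diagonal with strictly positive entries. For $i\neq j$, expand entrywise,
$$\Omega_{ij}=\sum_{k\in V}\Lambda_{kk}\,L_{ik}\,L_{jk}.$$
Because $L$ is lower triangular with unit diagonal and $L_{st}=0$ for $s\neq t$ with $(s,t)\notin\mathscr{E}$, a summand can be nonzero only if $L_{ik}$ and $L_{jk}$ are both nonzero; inspecting which $(i,k)$ and $(j,k)$ entries of $L$ are permitted to be nonzero, this restricts $k$ to one of three mutually exclusive configurations: either $k=i$ with $j\rightarrow i$ in $\D$, or $k=j$ with $i\rightarrow j$ in $\D$, or $k\notin\{i,j\}$ with both $i\rightarrow k$ and $j\rightarrow k$, i.e., $k$ is a common child of $i$ and $j$.

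In each of the three configurations, $i$ and $j$ are adjacent in $\D^{\mathrm{m}}$: the first two because the directed edge between them survives moralization as an undirected edge, and the third because moralizing the immorality $i\rightarrow k\leftarrow j$ explicitly inserts the edge $\{i,j\}$. Hence if $\{i,j\}$ is \emph{not} an edge of $\D^{\mathrm{m}}$, every summand vanishes and $\Omega_{ij}=0$, which by the characterization above gives $\Sigma\in\mathrm{PD}_{\D^{\mathrm{m}}}$.

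The only delicate step is the bookkeeping in the third case, namely identifying that a shared nonzero column of the Cholesky factor for rows $i$ and $j$ corresponds exactly to a common child of $i$ and $j$, which is precisely what moralization is designed to cover. No additional verification is needed that the resulting $\Sigma$ is positive definite in the undirected sense, since $\Sigma\succ 0$ is already part of $\mathrm{PD}_{\D}$ and $\mathrm{PD}_{\D^{\mathrm{m}}}$ only imposes zero constraints on $\Sigma^{-1}$ beyond positive definiteness.
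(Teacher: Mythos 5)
Your proof is correct and takes essentially the same approach as the paper's: expand $\Omega_{ij}=\sum_{k}\Lambda_{kk}L_{ik}L_{jk}$ using the modified Cholesky decomposition from Remark \ref{rem:chol_decom_PD} and observe that any nonzero summand forces $i$ and $j$ to be adjacent in $\D^{\mathrm{m}}$, so non-adjacency yields $\Omega_{ij}=0$ as required by Equation \eqref{eq:graphical_model_prop}. If anything, your explicit three-case bookkeeping (handling $k=i$ and $k=j$ via the directed edge surviving moralization) is slightly more careful than the paper's proof, which passes directly to the common-child configuration.
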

\begin{proof}
Suppose $\Sigma\in \mathrm{PD}_{\D}$. Let $\Omega=\Sigma^{-1}=L\Lambda L'$ be the modified Cholesky decomposition of $\Omega\in \mathrm{P}_{\D}$. It is required to prove that $\Omega$ is also in $\mathrm{P}_{\D^{\mathrm{m}}}$. In light of Equation \ref{eq:graphical_model_prop} it suffices to show that if $i$ and $j$  are non-adjacent in $\D^{\mathrm{m}}$, then $\Omega_{ij}= 0$. On the contrary, suppose that $\Omega_{ij}\neq 0$. Therefore
\[
\Omega_{ij}=\sum_{k=1}^{p}\Lambda_{kk}L_{ik}L_{jk}\neq 0. 
\]
since $\Lambda_{kk}>0$, this implies that there exists $k$  such that $L_{ik}\neq 0$ and $L_{jk}\neq 0$. Consequently, $i\rightarrow k$ and $j\rightarrow k$. Hence $i$ and $j$ are parents of $k$ which in turn implies that $i$  and $j$  are adjacent in $\D^{\mathrm{m}}$, yielding a contradiction. 
\end{proof}

\begin{proposition}\label{prop:equiv}
If $\D$  is a perfect DAG, and $\G = \D^{\mathrm{u}}$ is the undirected version of $\D$, then $\mathrm{PD}_{\D}=\mathrm{PD}_{\G}$. 
\end{proposition}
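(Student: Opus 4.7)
The plan is to establish both inclusions separately. The forward inclusion $\mathrm{PD}_{\D} \subseteq \mathrm{PD}_{\G}$ is essentially free from Lemma~\ref{lem:wermuth}: since $\D$ is perfect it has no immoralities, and consequently its moral graph coincides with its undirected version, $\D^{\mathrm{m}} = \D^{\mathrm{u}} = \G$. The lemma then yields $\mathrm{PD}_{\D} \subseteq \mathrm{PD}_{\D^{\mathrm{m}}} = \mathrm{PD}_{\G}$, with no further work required.

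The substantive direction is the reverse inclusion $\mathrm{PD}_{\G} \subseteq \mathrm{PD}_{\D}$. Starting from an arbitrary $\Sigma \in \mathrm{PD}_{\G}$, I would form the precision matrix $\Omega = \Sigma^{-1}$ and appeal to the unique modified Cholesky decomposition $\Omega = L\Lambda L'$ with $L$ unit lower triangular and $\Lambda$ diagonal and positive, which exists because $\Omega \succ 0$. By Remark~\ref{rem:chol_decom_PD}, reducing $\Sigma \in \mathrm{PD}_{\D}$ amounts to verifying $L \in \mathcal{L}_{\D}$, i.e.\ that $L_{ij} = 0$ whenever $i > j$ with $i \notin \pa(j)$ (equivalently $i \in \pr(j)$).

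I would carry this out by strong induction on the column index $j = 1, 2, \dots, p$, with inductive hypothesis that the support of the $k$th column of $L$ is contained in $\fa(k)$ for every $k < j$. The workhorse is the Cholesky identity $\Omega_{ij} = L_{ij}\Lambda_{jj} + \sum_{k < j} L_{ik}\Lambda_{kk}L_{jk}$, valid for $i > j$. For $i \in \pr(j)$ the left-hand side is zero, since $\Sigma \in \mathrm{PD}_{\G}$ forces $\Omega_{ij} = 0$ whenever $i \not\sim_{\G} j$; the remaining task is to show that each summand on the right vanishes, after which $\Lambda_{jj} > 0$ delivers $L_{ij} = 0$. The base case $j = 1$ collapses to $\Omega_{i1} = L_{i1}\Lambda_{11}$ and is immediate.

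The genuine content lies in the inductive step, and perfectness of $\D$ is exactly the hypothesis that drives it through. The potential obstruction is the existence of some $k < j$ with $L_{ik}$ and $L_{jk}$ both nonzero; by the hypothesis applied to column $k$, both $i$ and $j$ would lie in $\fa(k)$, and since neither equals $k$ they would both sit in $\pa(k)$. But then $i$ and $j$ would be co-parents of $k$ in $\D$, and perfectness (no immoralities) forces $i$ and $j$ adjacent in $\D$, hence in $\G$, contradicting $i \in \pr(j)$. Thus each product $L_{ik}L_{jk}$ indeed vanishes for $k < j$, the induction closes, and $L \in \mathcal{L}_{\D}$ as required. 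This also clarifies where perfectness is essential: without it, two common parents of a vertex would not be forced to be adjacent in $\G$, and the zero pattern of $\Omega$ could fail to propagate to the triangular factor $L$.
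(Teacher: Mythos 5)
Your proof is correct and takes essentially the same route as the paper: the forward inclusion via $\D^{\mathrm{m}}=\G$ combined with Lemma~\ref{lem:wermuth}, and the reverse inclusion by showing the modified Cholesky factor $L$ of $\Sigma^{-1}$ lies in $\mathcal{L}_{\D}$, with perfectness killing the cross terms through the immorality contradiction. If anything, your explicit strong induction on the column index is more careful than the paper's own argument, which passes directly from $L_{ik}\neq 0$, $L_{jk}\neq 0$ to an immorality $i\rightarrow k\leftarrow j$ — a step that tacitly assumes nonzero entries of $L$ already correspond to edges of $\D$, and which strictly requires exactly the inductive hypothesis (support of each earlier column contained in the corresponding $\fa(k)$) that you maintain.
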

\begin{proof}
Suppose $\D$ is a perfect DAG. Thus $\D^{\mathrm{m}}=\G$  and by Lemma \ref{lem:wermuth} $\mathrm{PD}_{\D}\subseteq \mathrm{PD}_{\G}$. Now to establish the other inclusion, assume that $\Sigma\in \mathrm{PD}_{\G}$.  Let $\Sigma^{-1}=L\Lambda L'$ be the modified Cholesky decomposition of $\Sigma^{-1}$. By Remark \ref{rem:chol_decom_PD} it suffices to show that if $(i,j)\notin \mathscr{E}$  and $i>j$, then $L_{ij}=0$. Note however that $(i,j)\notin \mathscr{E}$ implies that $\{i, j\}\notin \mathscr{V}$. Therefore
\begin{equation*}\label{eq:l_ij}
0=\Sigma_{ij}^{-1}=\Lambda_{jj}L_{ij}+\sum_{k\neq j}\Lambda_{kk}L_{ik}L_{jk}.
\end{equation*}
Assume to the contrary that $L_{ij}\neq 0$, then there exists an index $k$  such that $L_{ik}\neq 0$  and $L_{jk}\neq 0$. This in turn implies that there exists an immorality $i\rightarrow k\leftarrow j$ since by assumption $(i,j)\notin \mathscr{E}$. We have thus reached a contradiction to the fact that $\D$ is perfect. Therefore $L\in \mathcal{L}_{\D}$ and consequently $\Sigma^{-1}\in \mathrm{P}_{\D}$ or $\Sigma\in \mathrm{PD}_{\D}$.
\end{proof}
\begin{Rem}
Note that the statement of Proposition \ref{prop:equiv} can be rephrased as follows: if $\D$ is a perfect DAG, then a normal distribution obeys the directed local Markov property w.r.t. $\D$ if and only if it obeys the pairwise Markov property w.r.t. $\D^{\mathrm{u}}$, the undirected version of $\D$. It can be easily shown that if $\D$ is a perfect DAG, the above statement holds in more generality than just for normal distributions \cite{Lauritzen1996}.
\end{Rem}

\noindent {\sc Convention:} Hereafter in this paper, and unless otherwise stated, we assume that  $\G=(V,\mathscr{V})$  is the undirected version of  the DAG  $\D=(V,\mathscr{E})$, i.e., $\G=\D^{\mathrm{u}} $.

  \begin{definition}
  Let  $\mathcal{M}$  be a subset of the space of $p\times p$ symmetric matrices, denoted by $\mathrm{S}_{p}(\R)$. We say that a  $\D$-partial matrix  $\Gamma$  can be completed in $\mathcal{M}$ if there exists a matrix  $T\in \mathcal{M}$  such that   $T_{ij}=\Gamma_{ij}$  for each  $(i,j)\in \mathscr{E}$. We refer to $T$ as a completion of $\Gamma$ in $\mathcal{M}$, or simply a completion of $\Gamma$ if $\mathcal{M}$ is $\mathrm{S}_{p}(\R)$.
  \end{definition}
  
 Similar definitions can also given in the context of undirected graphs, i.e, the completion of $\G$-partial matrices.\\

  \begin{corollary}\label{cor:per_dec}
Let  $\D$ be a perfect DAG and let $\G = \D^{\mathrm{u}}$ denote the undirected version of $\D$. Then $\Gamma\in \mathrm{I}_{\D}$  can be completed in $\mathrm{PD}_{\D}$ if and only if  it can be completed in $\mathrm{PD}_{\G}$. 
\end{corollary}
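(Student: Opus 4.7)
The plan is to derive this corollary essentially for free from Proposition~\ref{prop:equiv}. The key observation is that, since $\G = \D^{\mathrm{u}}$, a $\D$-partial matrix and a $\G$-partial matrix are the \emph{same} object: by definition $\Gamma \in \mathrm{I}_{\D}$ is a symmetric function on $\{(i,j) : \{i,j\} \in \mathscr{V}\}$, so the pattern depends only on the undirected edge set $\mathscr{V}$ and not on the orientation of the arrows in $\D$. Likewise, the completion requirement $T_{ij} = \Gamma_{ij}$ for $(i,j) \in \mathscr{E}$ reduces, by the symmetry of $T$ and $\Gamma$, to $T_{ij} = \Gamma_{ij}$ for all $\{i,j\} \in \mathscr{V}$, which is exactly the condition defining a completion of $\Gamma$ viewed as a $\G$-partial matrix. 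Hence $\mathrm{I}_{\D} = \mathrm{I}_{\G}$ and the two notions of ``completion of $\Gamma$'' coincide.

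With this identification in place, I would simply invoke Proposition~\ref{prop:equiv}: because $\D$ is perfect and $\G = \D^{\mathrm{u}}$, we have $\mathrm{PD}_{\D} = \mathrm{PD}_{\G}$. Consequently, the existence of a matrix $T \in \mathrm{PD}_{\D}$ agreeing with $\Gamma$ on every edge of $\D$ is the same statement as the existence of a matrix $T \in \mathrm{PD}_{\G}$ agreeing with $\Gamma$ on every edge of $\G$, which proves both implications of the corollary at once.

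There is no real technical obstacle here; the only step that requires any care is the bookkeeping noted in the first paragraph, confirming that the pattern of a $\D$-partial matrix is unaffected by passing from $\D$ to its undirected skeleton $\G$. Once this is made explicit, the corollary is an immediate restatement of Proposition~\ref{prop:equiv} at the level of partial matrices.
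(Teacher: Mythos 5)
Your proposal is correct and matches the paper's own argument: the paper simply states that the corollary is immediate from Proposition~\ref{prop:equiv}, which is exactly the step you invoke. Your extra bookkeeping observation that $\mathrm{I}_{\D}$ and the notion of completion depend only on the undirected skeleton $\G = \D^{\mathrm{u}}$ is the same implicit identification the paper relies on, just spelled out explicitly.
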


\begin{proof}
The proof is immediate from Proposition \ref{prop:equiv}.
\end{proof}

   \subsection{Positive definite completion in $\mathrm{P}_{\D}$} \label{subsec:p_g}
An important question in the probabilistic analysis of directed Markov random fields/DAGs is whether a $\D$-partial matrix can be completed in $\mathrm{P}_{\D}$, i.e., whether a given $\D$-partial matrix corresponds to an inverse covariance matrix of a DAG model. Similar questions can be asked about completions in $\mathrm{PD}_{\D}$. These are inherently algebraic questions. Note that Remark \ref{rem:chol_decom_PD} implies that one can potentially recover the full matrix $\Omega$ merely from the entries that correspond to the edge set of  $\D$. We formalize this statement in the proposition below.
   
      \begin{proposition}\label{prop:completion_in_PD}
Let $\Gamma$ be a $\D$-partial matrix in $\mathrm{I}_{\D}$. If \ $\Gamma_{11}\neq 0$, then
\begin{itemize}
\renewcommand{\labelitemi}{$(a)$}
\item Almost everywhere (w.r.t. Lebesgue measure on $\mathrm{I}_{\D}$), there exist a unique lower triangular matrix $L\in \mathcal{L}_{\D}$  and a unique diagonal matrix $\Lambda \in \R^{p\times p}$ such that 
$\widehat{\Gamma}=L\Lambda L'$ is a completion of $\Gamma$.
\renewcommand{\labelitemi}{$(b)$}
\item The matrix $\widehat{\Gamma}$ is the unique positive definite completion of  $\Gamma$ in $\mathrm{P}_{\D}$ if and only if the diagonal entries of $\Lambda$ are all strictly positive.
\end{itemize}
\end{proposition}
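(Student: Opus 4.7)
My plan is to construct $(L,\Lambda)$ explicitly by a forward sweep over $j=1,2,\ldots,p$, reading $\Lambda_{jj}$ and the entries $L_{ij}$ with $i\in\pa(j)$ directly from the specified entries of $\Gamma$ together with previously-computed entries of $L,\Lambda$. Both parts of the proposition will follow once this algorithm and its domain of validity are in place.

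At step $j$, expanding $(L\Lambda L')_{ij}=\sum_k L_{ik}\Lambda_{kk}L_{jk}$ and using that $L\in\mathcal{L}_{\D}$ forces $L_{jk}=0$ unless $k=j$ or $k\in\mathrm{ch}(j)$ (and similarly for $L_{ik}$), the diagonal equation becomes
\[
\Gamma_{jj}=\Lambda_{jj}+\sum_{k<j,\;k\in\mathrm{ch}(j)}\Lambda_{kk}L_{jk}^{2},
\]
and for each $i\in\pa(j)$,
\[
\Gamma_{ij}=L_{ij}\Lambda_{jj}+\sum_{k<j,\;k\in\mathrm{ch}(i)\cap\mathrm{ch}(j)}L_{ik}\Lambda_{kk}L_{jk}.
\]
The key bookkeeping observation is that every quantity on the right-hand side other than $\Lambda_{jj}$ and $\{L_{ij}\}_{i\in\pa(j)}$ lies in columns $k<j$ of $L$ and is already known: indeed $L_{ik}\neq 0$ with $k<i$ forces $k\in\mathrm{ch}(i)$, i.e.\ $i\in\pa(k)$, so $L_{ik}$ was assigned precisely at the earlier step~$k$. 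The diagonal equation then yields $\Lambda_{jj}$ immediately, and provided $\Lambda_{jj}\neq 0$ the off-diagonal equations determine each $L_{ij}$ uniquely.

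For part (a), $\Lambda_{11}=\Gamma_{11}\neq 0$ by hypothesis, and inductively each $\Lambda_{jj}$ is a rational function of the entries of $\Gamma$. None of these rational functions is identically zero: evaluating at the restriction of the identity matrix to $\mathrm{I}_{\D}$ returns $L=I_p$, $\Lambda=I_p$, with all $\Lambda_{jj}=1$. Hence the bad locus where some $\Lambda_{jj}$ vanishes is a proper algebraic subvariety of $\mathrm{I}_{\D}$ of Lebesgue measure zero; outside this locus the algorithm terminates with a unique pair $(L,\Lambda)\in\mathcal{L}_{\D}\times\R^{p\times p}$ such that $\widehat{\Gamma}=L\Lambda L'$ completes $\Gamma$, and uniqueness is immediate from the step-by-step determination.

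For part (b), if $\Lambda_{jj}>0$ for all $j$, then $\widehat{\Gamma}=L\Lambda L'\succ 0$ and by Remark~\ref{rem:chol_decom_PD} lies in $\mathrm{P}_{\D}$; any second completion $\widehat{\Gamma}^{*}\in\mathrm{P}_{\D}$ has a modified Cholesky decomposition $L^{*}\Lambda^{*}(L^{*})'$ with $L^{*}\in\mathcal{L}_{\D}$ and $\Lambda^{*}\succ 0$ which again completes $\Gamma$, so the uniqueness in (a) forces $(L^{*},\Lambda^{*})=(L,\Lambda)$ and hence $\widehat{\Gamma}^{*}=\widehat{\Gamma}$. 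Conversely, if $\widehat{\Gamma}$ is a completion in $\mathrm{P}_{\D}$, its unique modified Cholesky factors (Remark~\ref{rem:chol_decom_PD}) must coincide with those produced by the algorithm, and positivity of the diagonal of $\Lambda$ is built into that remark. The principal obstacle throughout is precisely the bookkeeping verification above, ensuring that all terms on the right-hand side at step $j$ are actually available at step $j$; the ``almost everywhere'' qualifier in (a) is exactly what is needed to license the divisions by $\Lambda_{jj}$.
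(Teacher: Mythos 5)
Your proof is correct and takes essentially the same approach as the paper's: your forward column-by-column sweep is exactly the paper's Cholesky-factorization construction, the almost-everywhere claim rests on the same observation that each $\Lambda_{jj}=p_{j}(\Gamma)$ is a rational function of the specified entries, and part $(b)$ is deduced from part $(a)$ together with Remark \ref{rem:chol_decom_PD} just as in the paper. Your evaluation at the restriction of the identity matrix, which shows that no $p_{j}$ is identically zero, makes explicit a detail the paper leaves implicit.
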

\begin{proof}
~$(a)$~First we shall show that, almost everywhere w.r.t. Lebesgue measure on $\mathrm{I}_{\D}$,  $\Gamma$ can be uniquely completed to a matrix  $\widehat{\Gamma}$  in $\mathrm{S}_{p}(\R)$, not necessarily positive definite, such that   $\widehat{\Gamma}=L\Lambda L'$, for some $L\in \mathcal{L}_{\D}$  and  a diagonal matrix  $\Lambda \in R^{p\times p}$. We shall use the Cholesky factorization algorithm  \cite{Watkins1991} to construct  $\Lambda$ and $L$, column by column, in the following steps.
\begin{itemize}
\renewcommand{\labelitemi}{$ i)$}
\item Set $L_{ij}=0$ for each $(i,j)\notin \mathscr{E}$.
\renewcommand{\labelitemi}{$ ii)$}
\item  Set $\Lambda_{11}=\Gamma_{11},\; L_{i1}=\Lambda^{-1}_{11}\Gamma_{i1}$ for each $i\in \mathrm{pa}(1)$ and set $j=1$.
\renewcommand{\labelitemi}{$iii)$}
\item If $j<p$, then set $j=j+1$ and proceed to step  $iv)$, otherwise $L$ and $\Lambda$ are constructed such that they satisfy  the condition in part $(a).$
\renewcommand{\labelitemi}{$iv)$}
\item Set  $\Lambda_{jj}=\Gamma_{jj}-\displaystyle{\sum_{ k=1}^{ j-1}\Lambda_{kk}L^2_{jk}}$ and proceed to the next step.
\renewcommand{\labelitemi}{$ v)$}
\item  For each $i\in\mathrm{pa}(j)$ if  $\Lambda_{jj}\neq 0$, then set  $\displaystyle{L_{ij}=\Lambda_{jj}^{-1}( \Gamma_{ij}-\sum_{k=1}^{j-1}\Lambda_{kk}L_{ik}L_{jk} )}$, and return to step $ iii)$. If $\Lambda_{jj}=0$, then no completion of $\Gamma$  exists that satisfies the condition in part $(a)$. Consequently, $\Gamma$ cannot also be completed in $\mathrm{P}_{\D}$.
\end{itemize}
\noindent
 Note that for each $1\leq j \leq p$, the expression for $\Lambda_{jj}$ given by $\displaystyle{\Gamma_{jj}-\sum_{k=1}^{j-1}\Lambda_{kk}L^2_{jk}}$ in step $iv)$, considered as a function of $\Gamma_{ij}, \; (i,j)\in \mathscr{E}$, is a rational function, say  $p_{j}(\Gamma) $. In particular, almost everywhere w.r.t. Lebesgue measure on $\mathrm{I}_{\D}$, $\Lambda_{jj} = p_{j}(\Gamma)\neq 0$  for all $1\leq j\leq p$. Therefore, almost everywhere, the process above yields matrices $L$  and $\Lambda$ that satisfy the condition in part $(a)$.\\
\noindent
$(b)$ Part $(b)$ now follows from part $(a)$ and Remark \ref{rem:chol_decom_PD}.
\end{proof}

\begin{Rem}\label{rem:no_guarantee_PD}
It is clear from Proposition \ref{prop:completion_in_PD} above that a positive definite completion of  $\Gamma$ in $\mathrm{P}_{\D}$ is not always guaranteed. Moreover, the ability to complete in $\mathrm{P}_{\D}$ is not known beforehand, and is determined as a byproduct of having gone through the completion process itself. Having said this, if at any stage a $\Lambda_{jj}$ becomes negative or zero, it is evident from Proposition \ref{prop:completion_in_PD} that a completion in $\mathrm{P}_{\D}$ is no longer possible and the completion process can be terminated.   
\end{Rem}

\noindent We now demonstrate the completion outlined above on a partial matrix $\Gamma\in \mathrm{I}_{\D}$.

\begin{figure}[htbp]
\begin{center}
\includegraphics[width=3.0cm]{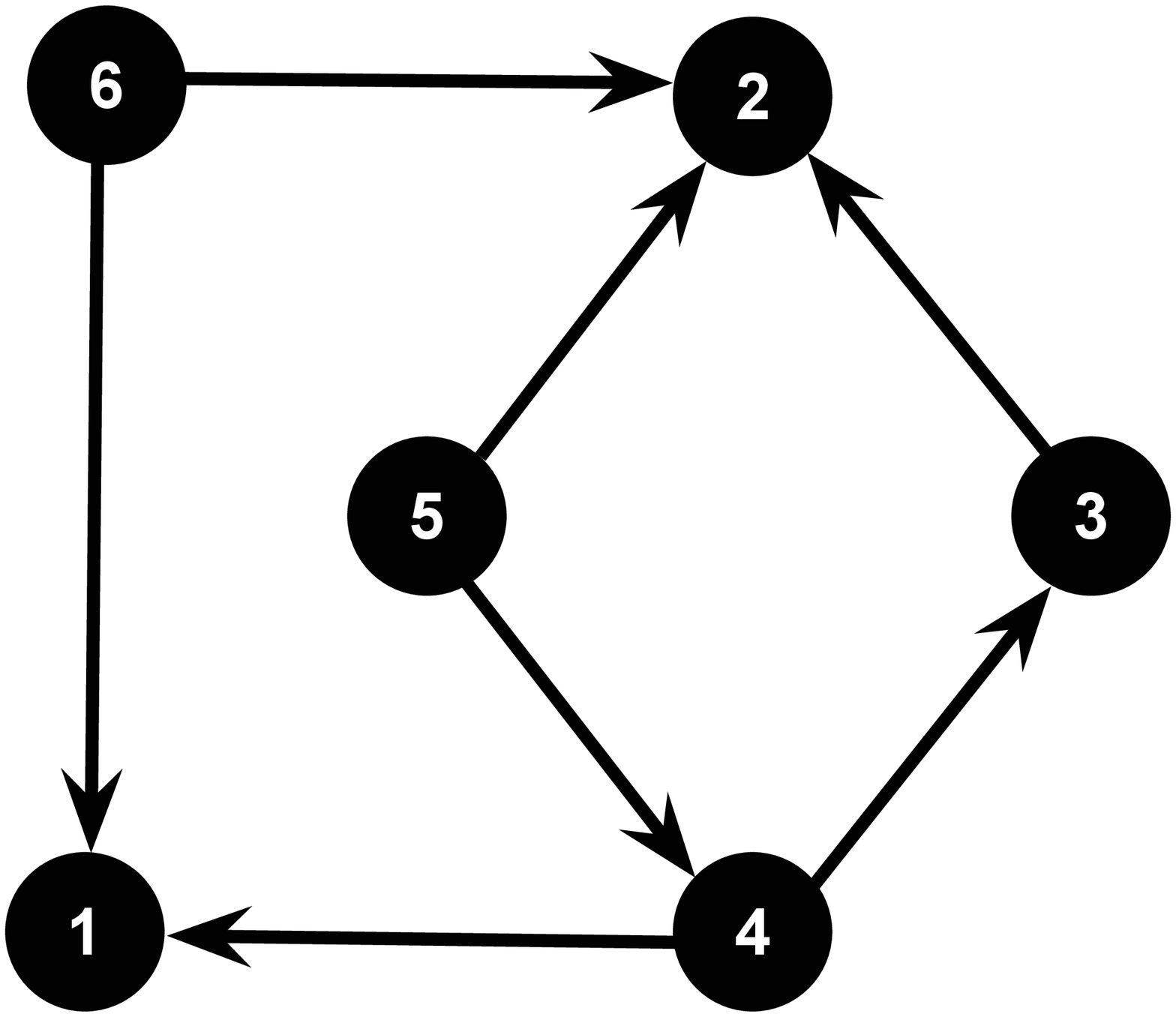}
\caption{Completion in  $\mathrm{P}_{\D}$.}
\label{fig:dag_pg}
\end{center}
\end{figure}

\begin{Ex}
Consider the DAG $\D$  given by Figure  \ref{fig:dag_pg}  and the $\D$-partial matrix $\Gamma$ given by
\[
\Gamma=
\left(
\begin{matrix}
 1  &  * &  * &  -3  &  *   & 4\\
  * &  -1 &  -2 &   * & -5  &  2\\
* & -2   &-2  &-10  &*  &  *\\
 -3  &  * & -10&   56    &3 & *\\
* &  -5  &*  &  3 & -30  & *\\
 4 &   2   & *  &*  &*  & 13
\end{matrix}
\right).
\]

Now by applying the completion process in Proposition \ref{prop:completion_in_PD} to $\Gamma$ we obtain
\[
\Lambda=
\left(
\begin{matrix}
 1    &0 &   0 &   0 &   0 &   0\\
  0   &-1 &   0 &   0 &   0 &   0\\
  0 &   0  &  2 &   0  &  0  &  0\\
  0    &0 &   0 &  -3 &   0 &   0\\
0   & 0 &   0 &   0 &  -2 &   0\\
 0    & 0 &   0  &  0  &  0 &   1
\end{matrix}
\right),\qquad 
L=
\left(
\begin{matrix}
1&    0 &   0 &   0 &   0 &   0\\
 0  &  1 &   0 &   0  &  0 &   0\\
 0  &  2 &   1 &   0 &   0 &   0\\
 -3 &   0 &  -5 &   1 &   0 &   0\\
  0  &  5 &   0 &  -1 &   1 &   0\\
4 &  -2 &   0 &   0  &  0  &  1
\end{matrix}
\right),
\]
which yields the completed matrix $\widehat{\Gamma}$ given as follows:
\[
\widehat{\Gamma}=
\left(
\begin{matrix}
 1  &  0  &  0 &  -3  &  0   & 4\\
  0 &  -1 &  -2 &   0  & -5  &  2\\
0  & -2   &-2  &-10  &-10  &  4\\
 -3  &  0 & -10&   56    &3 & -12\\
0 &  -5  &-10  &  3 & -30  & 10\\
 4 &   2   & 4  &-12   &10  & 13
\end{matrix}
\right).
\]
However, as the diagonal elements of $\Lambda$ are not strictly positive, $\Gamma$ cannot be completed in  $\mathrm{P}_{\D}$.
\end{Ex}

\subsection{Positive definite completion in $\mathrm{PD}_{\D}$}
\noindent An equally important question is whether a $\D$-partial matrix can be completed in $\mathrm{PD}_{\D}$, the space of covariance matrices corresponding to the DAG model $\mathscr{N}(\D)$. Recall that from Equation \eqref{eq:DL_normal} we have
\begin{equation}\label{eq:DL_normal_repeat}
\Sigma\in \mathrm{PD}_{\D}\Longleftrightarrow \: \Sigma\succ 0\: \text{and}\:\: \Sigma_{\pr(j),j}=\Sigma_{\pr(j), \pa(j)}(\Sigma_{\pa(j)})^{-1}\Sigma_{\pa(j),j} \:\; \forall j\in V.
\end{equation}
By recursively applying Equation \eqref{eq:DL_normal_repeat} we show below that it is possible to determine whether a $\D$-partial matrix can be completed in $\mathrm{PD}_{\D}$. The procedure is described in the following proposition .

\begin{proposition}\label{prop:completion_in_PDD}
Let $\Gamma\in\mathrm{Q}_{\D}$, then 
\begin{itemize}
\renewcommand{\labelitemi}{(a)}
\item There exists a completion process of polynomial complexity that can determine whether $\Gamma$ can be completed  in $\mathrm{PD}_{\D}$;  
\renewcommand{\labelitemi}{(b)}
\item If a completion exists, this completion is unique and can be determined constructively using the following process:\\
\end{itemize}
~$(1)$~ Set $\Sigma_{ij}=\Gamma_{ij}$ \; for each $\{i,j\}\in \mathscr{V}$  and set  $j=p$.\\
~$(2)$~ If  $j>1$, then set $j=j-1$ and proceed to the next step, otherwise $\Sigma$ is successfully completed.\\
~$(3)$~ If $\Sigma_{\fa(j)}\succ 0$, then proceed\footnote{Note that for each $j$, the submatrix $\Sigma_{\fa(j)}$ is fully determined by step {\it(2)}.} to the next step, otherwise the completion in $\mathrm{PD}_{\D}$  does not exist.\\
~$(4)$~ If $\pr(j)$ is empty, then return to step $(2)$, otherwise proceed to the next step.\\
~$(5)$~ If $\pa(j)$ is non-empty, then set $\Sigma_{\pr(j), j}=\Sigma_{\pr(j), \pa(j)}(\Sigma_{\pa(j)})^{-1}\Sigma_{\pa(j), j}$,  $\Sigma_{ j, \pr(j)}=\Sigma_{\pr(j), j }'$ and return to step $(2)$. If $\pa(j)$ is empty, then set $\Sigma_{\pr(j), j} = 0$ and return to step $(2)$. \\
\end{proposition}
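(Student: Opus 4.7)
My strategy is to verify three things in turn: uniqueness, a Schur-complement identity that reduces global positive definiteness to positivity of the family submatrices $\Sigma_{\fa(j)}$, and a polynomial cost estimate.

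\textbf{Uniqueness and bookkeeping.} The ordering convention $i\to j\Rightarrow i>j$ places both $\pa(j)$ and $\pr(j)$ inside $\{j+1,\ldots,p\}$, so by processing $j$ in decreasing order the whole block $\Sigma_{\pa(j)}$ has already been computed when the algorithm visits $j$. The specified entries $\Sigma_{jj}$ and $\Sigma_{j,\pa(j)}$ are read directly from $\Gamma$, while \eqref{eq:DL_normal_repeat} forces
\[
\Sigma_{\pr(j),j}=\Sigma_{\pr(j),\pa(j)}\bigl(\Sigma_{\pa(j)}\bigr)^{-1}\Sigma_{\pa(j),j},
\]
using the conventions $\Sigma_{\pa(j)}=1$ and $\Sigma_{\pa(j),j}=0$ when $\pa(j)=\emptyset$. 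A downward induction on $j$ then shows that any $\Sigma\in\mathrm{PD}_{\D}$ completing $\Gamma$ is uniquely determined by $\Gamma$, which gives the uniqueness claim in part (b).

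\textbf{The main analytical step.} The crux is the identity
\[
\Sigma_{jj}-\Sigma_{j,N}\Sigma_{N}^{-1}\Sigma_{N,j}\;=\;\Sigma_{jj}-\Sigma_{j,\pa(j)}\bigl(\Sigma_{\pa(j)}\bigr)^{-1}\Sigma_{\pa(j),j},
\]
where $N=\{j+1,\ldots,p\}=\pa(j)\cup\pr(j)$. This says that the Schur complement of $\Sigma_N$ in $\Sigma_{\{j\}\cup N}$ coincides with the Schur complement of $\Sigma_{\pa(j)}$ in $\Sigma_{\fa(j)}$. The fastest verification is to solve the block system $\Sigma_N v=\Sigma_{N,j}$ and observe that the DL rule is precisely the statement that $v$ vanishes on $\pr(j)$, so $v_{\pa(j)}=\Sigma_{\pa(j)}^{-1}\Sigma_{\pa(j),j}$. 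Given this identity and the Schur-complement criterion recalled at the end of \S\ref{sec:pre}, the equivalence
\[
\Sigma_{\{j,\ldots,p\}}\succ 0\iff \Sigma_{\{j+1,\ldots,p\}}\succ 0\ \text{and}\ \Sigma_{\fa(j)}\succ 0
\]
is applied inductively downward, starting from the base case $\Sigma_{\{p\}}=(\Gamma_{pp})\succ 0$, which holds because $\Gamma\in\mathrm{Q}_{\D}$. This shows that the completed $\Sigma$ is PD iff every $\Sigma_{\fa(j)}$ produced by the algorithm is PD.

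\textbf{Conclusion and cost.} If the procedure never aborts, the resulting $\Sigma$ is PD by the chained Schur identity and satisfies \eqref{eq:DL_normal} by construction, so \eqref{eq:DL_normal_repeat} gives $\Sigma\in\mathrm{PD}_{\D}$. Conversely, if \emph{any} completion in $\mathrm{PD}_{\D}$ exists, uniqueness forces it to agree with the output of the procedure, and then each $\Sigma_{\fa(j)}$ is a principal submatrix of a PD matrix and hence PD, so the algorithm cannot abort at step~$(3)$. This establishes (a). For the complexity, each vertex visit requires a positivity test on $\Sigma_{\fa(j)}$, an inverse $\Sigma_{\pa(j)}^{-1}$, and two matrix products, each of which is $O(p^3)$; summing over the $p$ vertices yields $O(p^4)$ arithmetic operations, which is polynomial in $p$.

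\textbf{Expected obstacle.} The only non-routine piece is the Schur-complement identity in the second paragraph; everything else is either indexing bookkeeping or a direct appeal to uniqueness. I expect the cleanest exposition to prove the identity first as a stand-alone lemma (solving the block system $\Sigma_N v=\Sigma_{N,j}$ under the DL constraint) and then use it to drive the downward induction.
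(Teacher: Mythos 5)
Your proposal is correct and follows essentially the same route as the paper's proof: a downward induction over layers $j=p,\ldots,1$ in which the step-$(5)$ assignment is shown via a Schur-complement computation to preserve positive definiteness, with necessity coming from the fact that each $\Sigma_{\fa(j)}$ is forced by $\Gamma$ and must appear as a principal submatrix of any completion. The only difference is packaging: you pivot on the full block $N=\{j+1,\ldots,p\}$ and collapse the scalar Schur complement to $\Sigma_{jj|\pa(j)}$ by solving $\Sigma_N v=\Sigma_{N,j}$ and noting $v$ vanishes on $\pr(j)$, whereas the paper pivots on $\pa(j)$ and exhibits the block-diagonal Schur complement $\left(\begin{smallmatrix}\Sigma_{jj|\pa(j)}&0\\0&\Sigma_{\pr(j)|\pa(j)}\end{smallmatrix}\right)$ in Equation \eqref{eq:schur_positive} — the same cross-term cancellation in two equivalent forms.
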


\begin{Rem}
Note that we can shorten step $(5)$ by making the following convention. For any two distinct sets $I,J\subseteq V$ if $I=\emptyset$, then set $\Sigma^{-1}_I=1$ and $\Sigma_{IJ}=0$. This convention will automatically take into account the case when $\pa(j)$ is empty.
\end{Rem}

\begin{proof}
Note that the completion process above starts with the the highest label vertex in $\D$ (this vertex is called a ``source" node and does not have any parents) and the algorithm proceeds in a descending manner. In the process above the positive definiteness condition of the completed matrix will be guaranteed by requiring that each principal submatrix, starting from the highest label and moving down, is positive definite at every step, i.e., positive definiteness is maintained layer by layer. Now suppose that down to an integer $1<\ell\leq p$, the process described above has succeeded in uniquely constructing a positive definite matrix  that corresponds to the principal submatrix $\Sigma_{\{\ell,\ell+1, \ldots, p\}}$ of $\Sigma$. Therefore, the process returns to step $(2)$ with  $j=\ell-1$ and then proceeds to step $(3)$. By this step note that $\Sigma_{\fa(j)}$ is fully determined because, (a) the submatrix $\Sigma_{\pa(j)}$ is specified since it is a submatrix of $\Sigma_{\{\ell,\ell+1, \ldots, p\}}$, where the latter is already determined by the end of the previous step, and (b) the row $\Sigma_{j, \pa(j)}$ is specified since it corresponds to directed edges in the DAG (i.e., $\Gamma\in\mathrm{Q}_{\D}$). Hence $\Sigma_{\fa(j)}$ is fully determined by the beginning of this step. If $\Sigma_{\fa(j)}$ is not positive definite, then the completion in $\mathrm{PD}_{\D}\subseteq\mathrm{PD}_{p}(\R)$ cannot exist as all the principal submatrices of a positive definite matrix also have to be positive definite, i.e., $\Sigma_{\fa(j)}\succ 0$ is a necessary condition to continue with the completion process. We now proceed to show that the condition $\Sigma_{\fa(j)}\succ 0$ is also sufficient for the completion process. In particular, the condition $\Sigma_{\fa(j)}\succ 0$ and Equation \eqref{eq:DL_normal_repeat} as in step $(5)$  uniquely determine the unspecified  entries of the $j$-th column and row of the new submatrix   $\Sigma_{\{j,j+1, \ldots, p\}}$. To see this write
 \[
 \Sigma_{\{j, j+1, \ldots, p\}}=
 \left(
 \begin{matrix}
 \Sigma_{jj}& \Sigma_{j,\pa(j)}&  \Sigma_{j,\pr(j)}\\
  \Sigma_{\pa(j), j}&  \Sigma_{\pa(j)}&  \Sigma_{\pa(j), \pr(j)}\\
   \Sigma_{\pr(j), j} & \Sigma_{\pr(j), \pa(j)}&  \Sigma_{\pr(j)}
 \end{matrix}
 \right).
  \]
  Thus we have
  \begin{align}\label{eq:schur_positive}
 \notag \Sigma_{\{j, j+1, \dots, p\}|\pa(j)}&=\left(\begin{matrix}\Sigma_{jj} &\Sigma_{j, \pr(j)} \\ \Sigma_{\pr(j), j}& \Sigma_{\pr(j)} \end{matrix}\right)-\left(\begin{matrix}\Sigma_{j, \pa(j)} &\Sigma_{\pr(j), \pa(j)}  \end{matrix}\right)(\Sigma_{\pa(j)})^{-1}\left(\begin{matrix}\Sigma_{\pa(j),  j}\\\Sigma_{\pa(j),\pr(j)}\end{matrix}\right)\\
 \notag &=\left(\begin{matrix}\Sigma_{jj|\pa(j)}& \Sigma_{j, \pr(j)}-\Sigma_{j, \pa(j)}(\Sigma_{\pa(j)})^{-1}\Sigma_{\pa(j), \pr(j)}\\
   \Sigma_{\pr(j), j}-\Sigma_{\pr(j), \pa(j)}(\Sigma_{\pa(j)})^{-1}\Sigma_{\pa(j), j} &\Sigma_{\pr(j)|\pa(j)} \end{matrix}\right)\\
   &=\left(\begin{matrix}\Sigma_{jj|\pa(j)}& 0\\
   0 &\Sigma_{\pr(j)|\pa(j)} \end{matrix}\right) \succ 0,
  \end{align}
 
Note that in the second last step we have used the expression 
\[
\Sigma_{\pr(j), j}=\Sigma_{\pr(j), \pa(j)}(\Sigma_{\pa(j)})^{-1}\Sigma_{\pa(j), j}
\]
 from step (5). The positive definiteness in the last step follows  respectively from the facts that, (a) $\Sigma_{jj| \pa(j)}\succ 0$ as this is equivalent to assuming $\Sigma_{\fa(j)}\succ 0$ since $\Sigma_{\pa(j)}\succ 0$, and, (b) $\Sigma_{\pr(j)| \pa(j)}\succ 0$ since it corresponds to a Schur complement of a principal submatrix of the positive definite matrix  $\Sigma_{\{j,j+1,  \ldots, p\}}$.
\end{proof}

 \begin{Rem}
 Note that in Proposition \ref{prop:completion_in_PDD} if we require $\Sigma_{\fa(j)}$ in step $(3)$ to be only invertible, instead of positive definite, then by a similar argument as in the proof of \ref{prop:completion_in_PD}, we can show that, almost everywhere w.r.t Lebesgue measure on $\mathrm{I}_{\D}$, the process in Proposition \ref{prop:completion_in_PDD} yields a matrix, not necessarily positive definite, $\Sigma\in \R^{p\times p}$ that satisfies Equation \eqref{eq:DL_normal}.
 \end{Rem}
 
We now proceed to illustrate the completion process in Proposition \ref{prop:completion_in_PDD} using two examples. The first example illustrates the completion process symbolically and the second example applies to a $\D$-partial matrix with numerical entries.
\begin{figure}[ht]
\centering
\subfigure[]{
\includegraphics[width=2.8cm]{fig_dC4.eps}
\label{fig:dC4}
}
\hspace{3cm}
\subfigure[]{
\includegraphics[width=2.7cm]{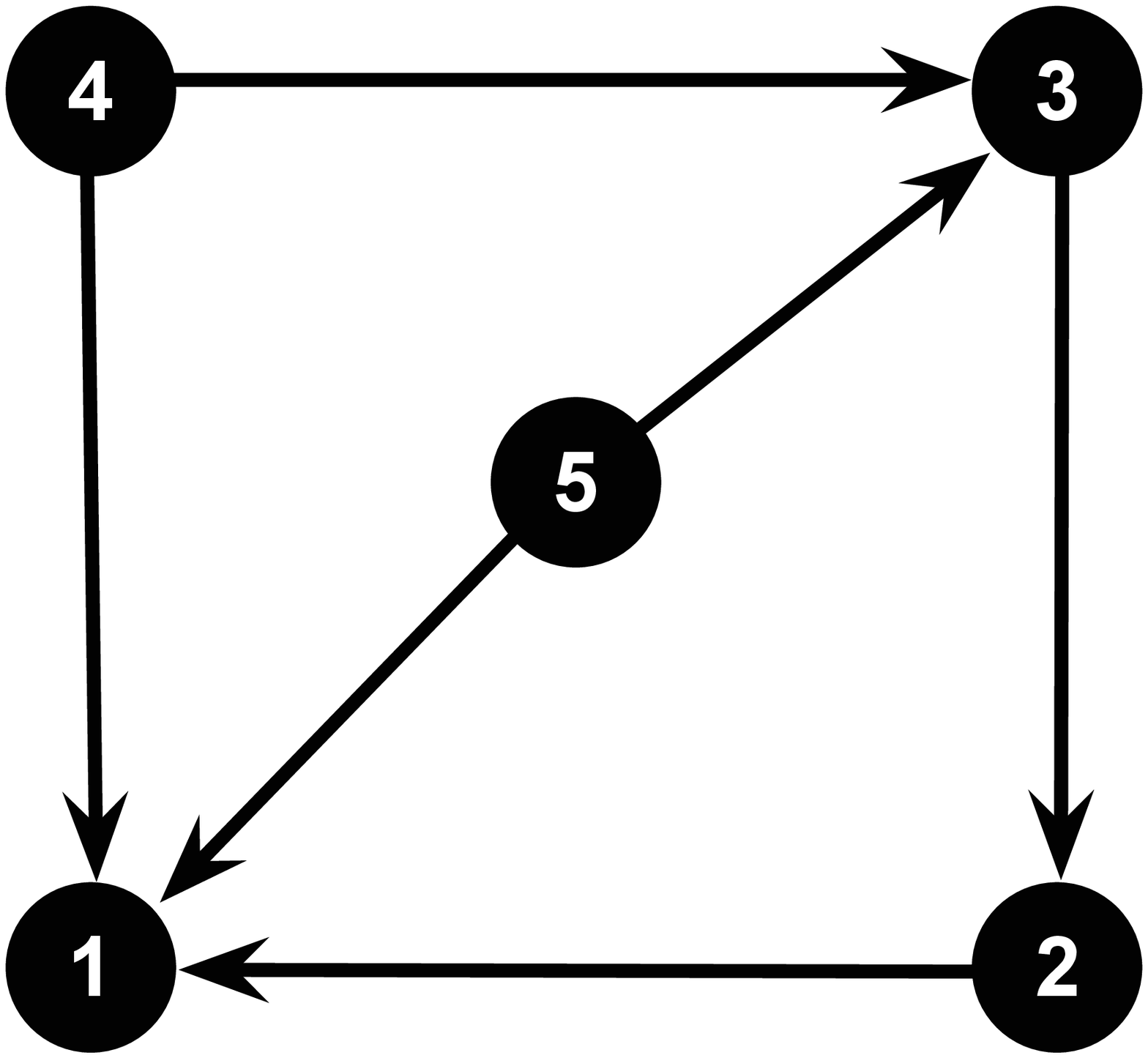}
\label{fig:dg5}
}
\caption{ Completion in  $\mathrm{PD}_\D$ from  Example \ref{ex:completion_on_PDD}. }
\end{figure}

\begin{Ex}\label{ex:completion_on_PDD}
 $(a)$~Consider the DAG $\D$  given in Figure \ref{fig:dC4}. A partial matrix corresponding to $\D$ can be written symbolically as
\[
\Gamma=\left(
\begin{matrix}
\Gamma_{11}&\Gamma_{12}&\Gamma_{13}&*\\
\Gamma_{21}&\Gamma_{22}&* &\Gamma_{24}\\
\Gamma_{31}&* &\Gamma_{33}&\Gamma_{34}\\
 *&\Gamma_{42}&\Gamma_{43}&\Gamma_{44}
\end{matrix}
\right),
\]
where incomplete entries in  $\Gamma$  are denoted by $*$. We now proceed in layers using the steps in Proposition \ref{prop:completion_in_PDD} as $j$ decreases from $4$ to $1$.\\ 

\noindent \underline{Layer: j=4}\\
In step  $(1)$  of the procedure described in Proposition \ref{prop:completion_in_PDD} we have
\[
\Sigma=\left(
\begin{matrix}
\Sigma_{11}&\Sigma_{12}&\Sigma_{13}&?\\
\Sigma_{21}&\Sigma_{22}&? &\Sigma_{24}\\
\Sigma_{31}&?&\Sigma_{33}&\Sigma_{34}\\
 ?&\Sigma_{42}&\Sigma_{43}&\Sigma_{44}
\end{matrix}
\right).
\]

\noindent \underline{Layer: j=3}\\
\noindent In step $(2)$ let $j=4-1=3$. In step $(3)$  either $\Sigma_{\fa(3)}=\left(\begin{matrix}\Sigma_{33}&\Sigma_{34}\\\Sigma_{43}&\Sigma_{44}  \end{matrix}\right)\succ 0$, otherwise the completion in $\mathrm{PD}_{\D}$ does not exist.  Assuming the former, we proceed to step  $(5)$.  Since $\pr(3)=\emptyset$, the layer down to $j=3$ is thus completed.\\

\noindent \underline{Layer: j=2}\\
\noindent We  now return to step  $(2)$  with  $j=3-1=2$. In step $(3)$ we check whether  $\Sigma_{\fa(2)}=\left(\begin{matrix}\Sigma_{22}&\Sigma_{24}\\\Sigma_{42}&\Sigma_{44}  \end{matrix}\right)\succ 0$. Assuming $\Sigma_{\fa(2)}\succ 0$, then in step  $(5)$, as $\pr(2)=\{3\}$, we set  $\Sigma_{32}=\Sigma_{34}\Sigma_{44}^{-1}\Sigma_{42}$ and the layer down to $j=2$ is thus completed. \\

\noindent \underline{Layer: j=1}\\
\noindent Now the process is returned to step $(2)$ with $j=2-1=1$. In step $(3)$ we first check whether
\[
\Sigma_{\fa(1)}=
\left(\begin{matrix}\Sigma_{11}&\Sigma_{12}&\Sigma_{13}\\ \Sigma_{21}&\Sigma_{22}&\Sigma_{34}\Sigma_{44}^{-1}\Sigma_{42}\\ \Sigma_{31}&\Sigma_{34}\Sigma_{44}^{-1}\Sigma_{42}&\Sigma_{33}  \end{matrix}\right)\succ 0.
\]
Assuming $\Sigma_{\fa(1)}\succ 0$, then in step  $(5)$, as $\pr(1)=\{4\}$  we set
\[
\Sigma_{41}=(\Sigma_{42},\Sigma_{43})\left(\begin{matrix}\Sigma_{22}&\Sigma_{34}\Sigma_{44}^{-1}\Sigma_{42}\\\Sigma_{34}\Sigma_{44}^{-1}\Sigma_{42}&\Sigma_{33}  \end{matrix}\right)^{-1}\left(\begin{matrix} \Sigma_{21}\\ \Sigma_{31} \end{matrix}\right).
\]
Now all the unspecified entries are determined and the completed matrix $\Sigma $ is said to be the completion of $\Gamma$ in $\mathrm{PD}_{\D}$.\\

~$(b)$~ Consider the DAG $\D$ given in Figure \ref{fig:dg5}  and  let
\[
\Gamma=\left(
\begin{matrix}
1&0.3&*&0.4&0.6\\
0.3&1&0.4&*&*\\
*&0.4&1&-0.3&0.5\\
0.4&*&-0.3&1&*\\
0.6&*&0.5&*&1
\end{matrix}
\right).
\]
Now by applying the procedure in Proposition \ref{prop:completion_in_PDD}, we start with  $j=5-1=4$. First note that $\Sigma_{\fa(4)}=\Sigma_{44}\succ 0$. In step  $(5)$  we  set  $\Sigma_{45}=0$, since  $\pa(4)=\emptyset$. In the next layer we have $j=3$ and it can be easily verified that 
\[
\Sigma_{\fa(3)}=\left(\begin{matrix}1&-0.3&0.5\\-0.3& 1&0\\ 0.5&0&1 \end{matrix}\right)\succ 0.
\]
Now $\pr(3)$ is empty so by step $(4)$ we return to step $(2)$ since $(5)$ in the completion process is redundant. Now in step $(3)$ for  $j=2$ it is obvious that
\[
\Sigma_{\fa(2)}=
\left(
\begin{matrix}
1& 0.4\\
0.4&1
\end{matrix}
\right)
\succ 0.
\]
Therefore we proceed to step  $(5)$ and calculate $\Sigma_{\pr(2), 2}=(\Sigma_{42},\Sigma_{52})=(-0.12, 0.2)$. Moving to $j=1$, it is easily verified that 
\[
\Sigma_{\fa(1)}=\left(\begin{matrix}1&0.3&0.4&0.6\\0.3&1&-0.12&0.2\\0.4&-0.12& 1&0\\0.6& 0.2&0&1 \end{matrix}\right)\succ 0.
\]
Finally, we proceed to step (4), where we have $\Sigma_{\pr(1), 1}=\Sigma_{31}$ and we set
\[
\Sigma_{31}=(0.4, -0.3, 0.5)\left(\begin{matrix} 1&-0.12&0.2\\-0.12&1&0\\0.2&0&1 \end{matrix}\right)^{-1}\left(\begin{matrix} 0.3\\0.4\\0.6 \end{matrix}\right)=0.2437.
\]

The process now terminates and the unique completion of $\Gamma$  in $\mathrm{PD}_{\D}$  is given by
\[
\Sigma=\left(
\begin{matrix}
1&0.3&0.2437&0.4&0.6\\
0.3&1&0.4&-0.12&0.2\\
0.2437&0.4&1&-0.3&0.5\\
0.4&-0.12&-0.3&1&0\\
0.6&0.2&0.5&0&1
\end{matrix}
\right).
\]
\noindent
In order to double check that $\Sigma$ is indeed in $\mathrm{PD}_{\D}$ we first compute $\Sigma^{-1}$:
\[
\Omega =\Sigma^{-1}=
\left(
\begin{matrix}
    2.353 &  -0.567  & 0  & -1.009  & -1.298\\
   -0.567  &  1.327  & -0.476 &   0.243 &   0.313\\
    0 &  -0.476  &  1.706  &  0.455 &  -0.758\\
   -1.009  &  0.243   & 0.455 &   1.569  &  0.330\\
   -1.298  &  0.313 &  -0.758   & 0.330    &2.095
   \end{matrix}
\right).
\]
Now the lower triangular matrix $R$ in the standard Cholesky decomposition of $\Omega$ is given by
\[
R=
\left(
\begin{matrix}
    1.534    &     0  &       0   &      0      &   0\\
   -0.370   & 1.091      &   0       &  0    &     0\\
       0     &  -0.436  &  1.231   &      0     &    0\\
   -0.658 &   0    &  0.369  &  1     &    0\\
   -0.846   & 0 &  -0.616 &   0   & 1
\end{matrix}
\right),
\]
which clearly shows that the lower triangular matrix $L$ in the corresponding modified Cholesky decomposition of $\Omega$ is in $\mathcal{L}_{\D}$.

\end{Ex}
Proposition \ref{prop:completion_in_PDD} establishes conditions under which a $\D$-partial matrix can be completed in $\mathrm{PD}_{\D}$. It therefore establishes conditions for the existence and uniqueness of the completion. A natural question to ask is if there are simple conditions which guarantee this completion. We now deduce from Proposition \ref{prop:completion_in_PDD} that if the digraph $\D$ is a perfect DAG then $\Gamma\in \mathrm{Q}_{\D}$ can always be completed in $\mathrm{PD}_{\D}$.

\begin{corollary}\label{cor:completion_per}
Let $\D$ be a perfect DAG and $\Gamma\in \mathrm{I}_\D$. Then  a necessary and sufficient condition for completing $\Gamma$ in $\mathrm{PD}_{\D}$ is that $\Gamma\in \mathrm{Q}_{\D}$. Moreover, if $\Gamma\in \mathrm{Q}_{\D}$, then $\Gamma$  can be simply completed to $\Sigma\in \mathrm{PD}_{\D}$ as follows:\\
~$(a)$~ Set $\Sigma_{ij}=\Gamma_{ij}$\;\: for each $\{i,j\}\in\mathscr{V}$,\\
~$(b)$~ Set $\Sigma_{\pr(j), j}=\Sigma_{\pr(j), \pa(j)}\Sigma_{\pa(j)}^{-1}\Sigma_{\pa(j),  j}$ \: and  $\Sigma_{j, \pr(j)}=\Sigma_{\pr(j), j}'$\:\: for each  $j=p-1, \ldots, 1$.
\end{corollary}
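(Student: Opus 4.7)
The plan is to derive the corollary directly from Proposition \ref{prop:completion_in_PDD} by exploiting the defining property of a perfect DAG: the family $\mathrm{fa}(j)=\mathrm{pa}(j)\cup\{j\}$ induces a complete subgraph of $\D$ for every $j$. The necessity direction is immediate: if $\Sigma\in\mathrm{PD}_\D$ completes $\Gamma$, then every principal submatrix of $\Sigma$ is positive definite, so in particular $\Sigma_C=\Gamma_C\succ 0$ for each $C\in\mathscr{C}_\D$, which is precisely the condition $\Gamma\in\mathrm{Q}_\D$.

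For sufficiency, I would feed $\Gamma$ to the procedure of Proposition \ref{prop:completion_in_PDD} and show that its only ``can-fail'' test, namely the check $\Sigma_{\mathrm{fa}(j)}\succ 0$ in step $(3)$, is satisfied automatically at every layer. First I would observe that since $\D$ is perfect, any two parents $i,i'\in\mathrm{pa}(j)$ are adjacent in $\D$, so $\{i,i'\}\in\mathscr{V}$; together with the directed edges $i\to j$, this says that every unordered pair in $\mathrm{fa}(j)$ lies in $\mathscr{V}$. Consequently $\mathrm{fa}(j)$ is contained in some clique $C_j\in\mathscr{C}_\D$, and $\Sigma_{\mathrm{fa}(j)}=\Gamma_{\mathrm{fa}(j)}$ is already specified by $\Gamma$, with no dependence on the previously completed layers.

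Combining this with the hypothesis $\Gamma\in\mathrm{Q}_\D$, the submatrix $\Gamma_{C_j}$ is positive definite, hence so is its principal submatrix $\Gamma_{\mathrm{fa}(j)}=\Sigma_{\mathrm{fa}(j)}$. Thus step $(3)$ of the procedure never halts the algorithm, and by Proposition \ref{prop:completion_in_PDD} the procedure produces a (unique) completion $\Sigma\in\mathrm{PD}_\D$. Since the positive-definiteness checks are now redundant, the algorithm reduces to the two-step recipe $(a)$--$(b)$ stated in the corollary, with step $(b)$ being exactly step $(5)$ of Proposition \ref{prop:completion_in_PDD}; uniqueness of the resulting $\Sigma$ is inherited from that proposition.

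The only substantive step is the structural observation that perfection forces $\mathrm{fa}(j)$ to be a clique of the underlying pattern; once this is in hand, everything else is bookkeeping on top of Proposition \ref{prop:completion_in_PDD}, so I do not anticipate any real obstacle.
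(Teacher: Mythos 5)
Your proposal is correct and follows essentially the same route as the paper's proof: necessity from the fact that every principal submatrix of a positive definite completion is positive definite, and sufficiency by observing that perfection makes $\mathrm{fa}(j)$ a complete subset contained in a clique, so $\Sigma_{\mathrm{fa}(j)}=\Gamma_{\mathrm{fa}(j)}\succ 0$ and the check in step $(3)$ of Proposition \ref{prop:completion_in_PDD} never fails, reducing the procedure to steps $(a)$--$(b)$. Your added remark that $\Sigma_{\mathrm{fa}(j)}$ is fully specified by $\Gamma$ independently of previously completed layers is a correct and slightly more explicit version of the same observation the paper makes.
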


\begin{proof}
($\Rightarrow$) 
If $\Gamma\notin \mathrm{Q}_{\D}$, then  $\Gamma$ cannot be completed to a positive definite matrix. This follows easily from the fact that the principal minors of a positive definite matrix are all strictly positive. In particular, if a completion for $\Gamma$ in $\mathrm{PD}_{\D}$ exists, then $\Gamma_{C}\succ 0$ for each $C\in \mathscr{C}_{\G}$, which implies that $\Gamma\in \mathrm{Q}_{\D}$.  

($\Leftarrow$) 
Now assume that  $\Gamma\in \mathrm{Q}_{\D}$. As $\D$ is perfect, by definition there are no immoralities present in $\D$, (i.e., all parents of each node are adjacent). Hence $\fa(j)$ is a complete subset of $V$ for each $j$. It is therefore contained in a clique of $\D$, and hence $\Sigma_{\fa(j)}=\Gamma_{\fa(j)}\succ 0 $ since $\Gamma\in \mathrm{Q}_{\D}$. Thus  step $(3)$ of the completion process in Proposition \ref{prop:completion_in_PDD} is always satisfied and can thus be omitted from the procedure. We can therefore conclude that $\Gamma$ can alway be completed in $\mathrm{PD}_{\D}$.

Now since $\Sigma_{\pa(j)}$ are all already available step $(5)$ can be performed for $j=p-1,\ldots,1$. The proof of Proposition \ref{prop:completion_in_PDD} demonstrates that after performing these steps, we obtain the completion of $\Gamma$ in $\mathrm{PD}_{\D}$.
\end{proof}
An alternative but similar procedure to that in Proposition \ref{prop:completion_in_PDD} for completing a $\D$-partial matrix $\Gamma$ in $\mathrm{PD}_{\D}$ is to construct a finite sequence of DAGs, $\D^{(0)}, \dots, \D^{(n)}$ such that $\D^{(n)}$ at the end of this sequence is perfect, and then a partial matrix, $\Gamma^{(n)}$ over $\D^{(n)}$. The first DAG in this sequence is $\D^{(0)}=\D$. If $\D^{(0)}$ is not perfect, then for each immorality of the form $i\rightarrow k\leftarrow j$ in $\D^{(0)}$ with $i>j$ we add a directed edge $i\rightarrow j$ to the edge set of $\D^{(0)}$. Let $\D^{(1)}$ be the DAG with  added edges. It is clear that $\D^{(0)}$ is an induced subgraph of $\D^{(1)}$. We continue this process until we obtain a perfect DAG. Therefore, we have a finite sequence $\D^{(0)}, \dots, \D^{(n)}$ of DAGs such that $\D^{(n)}$ at the end of this sequence is perfect. Now starting from the largest  vertex $j$, we use Equation \eqref{eq:DL_normal_repeat} to compute the entries $\Gamma_{ij}^{(n)}$ that correspond to added  edges $i\rightarrow j$ in $\D^{(n)}$. Unless for some $j$ the requirement  $\Sigma_{\pa(j)}\succ 0$ is not met  in Equation \eqref{eq:DL_normal_repeat}, this process succeeds in filling in those unspecified entries of $\Sigma$ that correspond to $\D^{(n)}$, i.e., we obtain a partial matrix $\Gamma^{(n)}$  over  $\D^{(n)}$. Since $\D^{(n)}$ is a perfect DAG, by Corollary \ref{cor:completion_per}, the partial matrix  $\Gamma^{(n)}$  can be completed in $\mathrm{PD}_{\D^{(n)}}$  if and only if it belongs to $\mathrm{Q}_{\D^{(n)}}$. Furthermore, $\Gamma^{(n)}$  can be completed by following the simple non-recursive completion procedure described in Corollary \ref{cor:completion_per}. It is clear that completion of $\Gamma^{(n)}$ in $\mathrm{PD}_{\D^{(n)}}$ is also the completion of $\Gamma$  in $\mathrm{PD}_{\D}$. 
We illustrate this alternative procedure by an example.
\begin{figure}[ht]
\centering
\subfigure[]{
\includegraphics[width=3cm]{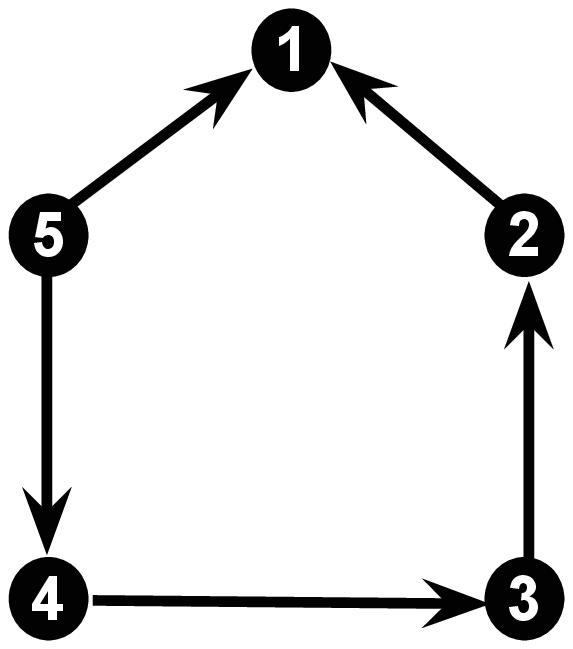}
\label{fig:alt_figa}
}
\hspace{3cm}
\subfigure[]{
\includegraphics[width=3cm]{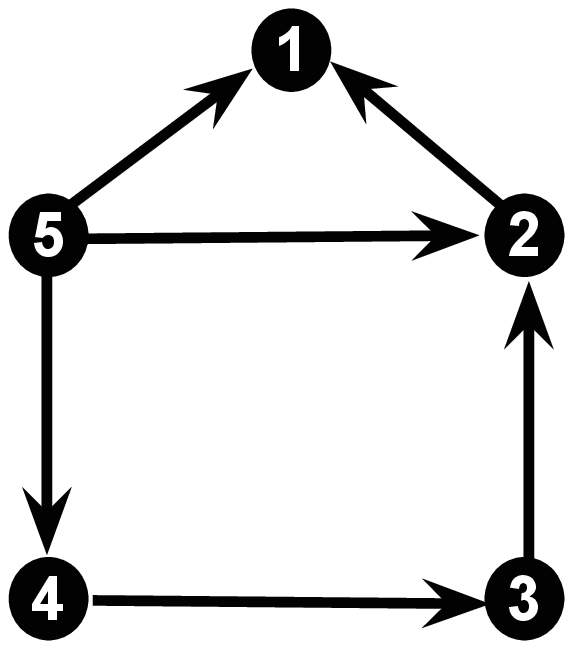}
\label{fig:alt_figb}
}
\hspace{3cm}
\subfigure[]{
\includegraphics[width=3cm]{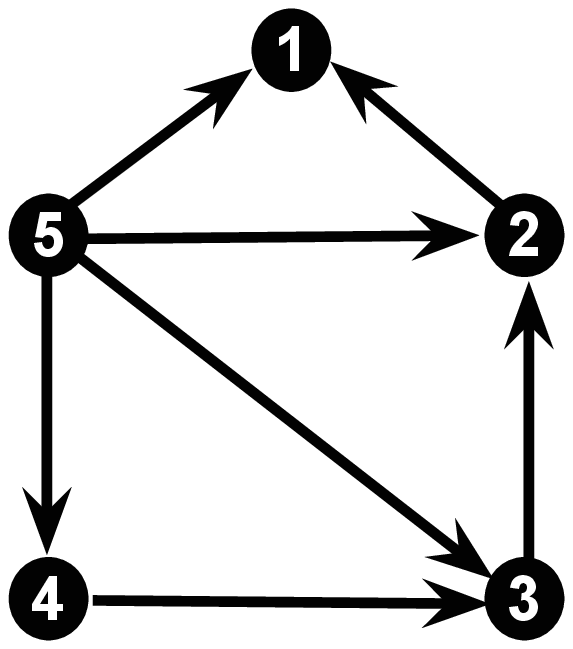}
\label{fig:alt_figc}
}
\hspace{3cm}
\subfigure[]{
\includegraphics[width=3cm]{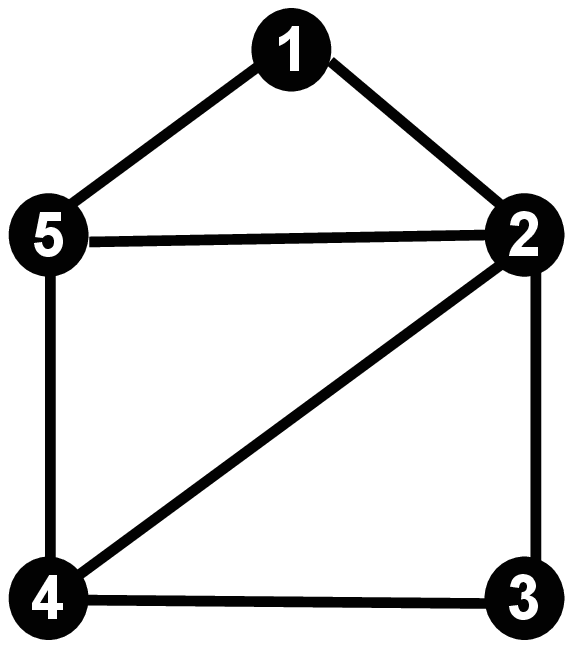}
\label{fig:alt_figd}
}
\caption{A finite sequence $\D_{0},\D_{1},\D_{2}$ of DAGs and the undirected version of $\D_{2}$ . }
\end{figure}

\begin{Ex} Let $\D$ be the DAG in Figure \ref{fig:alt_figa}.  Starting from $\D_{0}=\D$, the only immorality in this DAG is $5\rightarrow 1\leftarrow 2$. By adding the directed edge $5\rightarrow 2$ we obtain $\D_{1}$ in Figure \ref{fig:alt_figb}. Next we obtain the perfect DAG $\D_{2}$ in Figure \ref{fig:alt_figc}, by adding the directed edge $5\rightarrow 3$ corresponding to the immorality $5\rightarrow 2\leftarrow 3$ in $\D_{1}$.  Now consider the completion of the $\D$-partial matrix
\[
\Gamma=
\left(
\begin{matrix}
\Gamma_{11}&\Gamma_{12}&*&*&\Gamma_{15}\\
\Gamma_{21}&\Gamma_{22}&\Gamma_{23}&*&*\\
*&\Gamma_{32}&\Gamma_{33}&\Gamma_{34}&*\\
*&*&\Gamma_{43}&\Gamma_{44}&\Gamma_{45}\\
\Gamma_{15}&*&*&\Gamma_{54}&\Gamma_{55}
\end{matrix}
\right).
\] 
From  Equation \ref{eq:DL_normal_repeat} we compute 
\[
\Gamma_{53}=\Gamma_{54}\Gamma_{44}^{-1}\Gamma_{43},\:\:\text{and}\:\: \Gamma_{52}=\Gamma_{53}\Gamma_{33}^{-1}\Gamma_{32}=\Gamma_{54}\Gamma_{44}^{-1}\Gamma_{43}\Gamma_{33}^{-1}\Gamma_{32}.
\] 
Thus we obtain the following partial matrix over the perfect DAG $\D_{2}$ (or over the decomposable graph in Figure \ref{fig:alt_figd}):
\[
\Gamma^{(2)}=
\left(
\begin{matrix}
\Gamma_{11}&\Gamma_{12}&*&*&\Gamma_{15}\\
\Gamma_{21}&\Gamma_{22}&\Gamma_{23}&*&\Gamma_{54}\Gamma_{44}^{-1}\Gamma_{43}\\
*&\Gamma_{32}&\Gamma_{33}&\Gamma_{34}&\Gamma_{53}\Gamma_{33}^{-1}\Gamma_{32}\\
*&*&\Gamma_{43}&\Gamma_{44}&\Gamma_{45}\\
\Gamma_{15}&\Gamma_{54}\Gamma_{44}^{-1}\Gamma_{43}&\Gamma_{53}\Gamma_{33}^{-1}\Gamma_{32}&\Gamma_{54}&\Gamma_{55}
\end{matrix}
\right).
\]
\end{Ex}
\section{Completable DAGs and generalization of Grone et al.\cite{Grone1984}'s result}\label{sec:gen}
 A pertinent question in the positive definite completion problem for DAGs is the class of DAGs for which the  completion of a partial matrix  in $\mathrm{PD}_{\D}$ is certain to exist. Corollary \ref{cor:completion_per} asserts that if $\D$ is perfect and the $\D$-partial matrix $\Gamma\in \mathrm{Q}_{\D}$ then a completion in $\mathrm{PD}_\D$ always exists. Is the class of perfect graphs maximal in the sense that only for this class of graphs is completion guaranteed for all $\Gamma\in \mathrm{Q}_{\D}$ ? It is evident that a necessary condition for the existence of the completion in  $\mathrm{PD}_\D$, or any positive definite completion for that matter, is  that  $\Gamma$ is a partial positive definite matrix over  $\D$, i.e.,  $\Gamma\in \mathrm{Q}_{\D}$.

 \begin{theorem}\label{thm:existence_completion}
 Every partial positive definite matrix  over  $\D$  can be completed in  $\mathrm{PD}_{\D}$   if and only if  $\D$  is a perfect  DAG.
  \end{theorem}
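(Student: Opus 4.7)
The direction $(\Leftarrow)$ is a direct citation of Corollary \ref{cor:completion_per}. For $(\Rightarrow)$ I plan to prove the contrapositive: if $\D$ is not perfect, then some $\Gamma\in\mathrm{Q}_{\D}$ fails to admit a completion in $\mathrm{PD}_{\D}$. Non-perfectness yields an immorality $i\to k\leftarrow j$ with $i\not\sim_{\G}j$; by the topological ordering both $i,j>k$, and by symmetry I may take $i>j>k$.

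The construction will exploit this immorality. The key idea is to specify a partial matrix $\Gamma$ so that (a) it is partial positive definite over $\D$, and (b) applying Equation \eqref{eq:DL_normal} at the vertex $j$ forces $\Sigma_{ij}=0$ in any would-be completion $\Sigma\in\mathrm{PD}_{\D}$. For (b), note that $i\in\pr(j)$ since $i>j$ and $i\notin\pa(j)$, so reading the $i$-component of \eqref{eq:DL_normal} at $j$ gives
\[
\Sigma_{ij}=\Sigma_{i,\pa(j)}(\Sigma_{\pa(j)})^{-1}\Sigma_{\pa(j),j};
\]
arranging for $\Gamma_{\pa(j),j}\equiv 0$ (or invoking the $\pa(j)=\emptyset$ convention stated after \eqref{eq:DL_normal}) zeros this out. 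Concretely, I will fix $c\in(1/\sqrt{2},1)$ and a large constant $N>0$, and set $\Gamma_{ii}=\Gamma_{jj}=\Gamma_{kk}=1$, $\Gamma_{ik}=\Gamma_{jk}=c$, $\Gamma_{vv}=N$ for every $v\notin\{i,j,k\}$, and all remaining specified off-diagonals equal to $0$. Once $\Sigma_{ij}=0$ is forced, the $\{i,j,k\}$-principal submatrix of $\Sigma$ becomes
\[
\begin{pmatrix}1 & 0 & c\\ 0 & 1 & c\\ c & c & 1\end{pmatrix},
\]
whose determinant $1-2c^{2}$ is strictly negative, contradicting $\Sigma\succ 0$.

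The main obstacle I anticipate is verifying (a), i.e., that the very sparse partial matrix just defined genuinely lies in $\mathrm{Q}_{\D}$ for every possible clique structure of $\D$. The crucial observation is that since $\{i,j\}\notin\mathscr{V}$, no clique $C\in\mathscr{C}_{\D}$ can contain both $i$ and $j$; hence each $\Gamma_{C}$ decomposes as a block-diagonal matrix consisting of at most a single $2\times 2$ block of the form $\left(\begin{smallmatrix}1&c\\c&1\end{smallmatrix}\right)$ together with a diagonal block whose entries are $1$ or $N$. The $2\times 2$ block has determinant $1-c^{2}>0$, and the diagonal block is trivially positive for large $N$, so $\Gamma_{C}\succ 0$ for every clique once $N$ is large enough. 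The structural role of non-perfectness is therefore twofold: it supplies the non-adjacent pair $\{i,j\}$ needed to keep every clique submatrix manageable in (a), and it simultaneously keeps the Markov equation at $j$ active in (b), where the vanishing column $\Gamma_{\pa(j),j}$ drives $\Sigma_{ij}$ to the ``wrong'' value $0$ regardless of any choice of the remaining unspecified entries.
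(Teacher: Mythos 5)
Your proof is correct, and its core engine is exactly the paper's: put a common value $c\in(\sqrt{2}/2,1)$ on the two edges into a collider, zero out every other specified off-diagonal entry, use Equation \eqref{eq:DL_normal_repeat} at $j$ (with $\Sigma_{\pa(j),j}=\Gamma_{\pa(j),j}=0$, or the empty-parent convention) to force $\Sigma_{ij}=0$ in any would-be completion, and contradict positive definiteness via the $\{i,j,k\}$ principal minor $1-2c^{2}<0$; your verification that $\Gamma\in\mathrm{Q}_{\D}$ through the non-adjacency of $i$ and $j$ is the same observation the paper leaves as ``easily checked.'' Where you genuinely differ is in the global structure: the paper wraps this construction in an induction on $|V|$, first extending an arbitrary $\Gamma^{[1]}\in\mathrm{Q}_{\D_{[1]}}$ by a unit diagonal and a zero row to show that the completion property passes down to the induced DAG $\D_{[1]}$, invoking the induction hypothesis to get $\D_{[1]}$ perfect, and only then running the $\epsilon$-construction on an immorality whose collider is the vertex $1$. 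You instead apply the construction directly to an arbitrary immorality $i\rightarrow k\leftarrow j$, which works verbatim because $i\in\pr(j)$ follows from $i>j$ and $i\notin\pa(j)$ for any collider $k$, not just $k=1$; this eliminates both the induction and the restriction-to-$\D_{[1]}$ step, yielding a shorter and arguably cleaner proof of the forward direction, while your citation of Corollary \ref{cor:completion_per} for the reverse direction is precisely what the paper's own remark after the theorem sanctions. One cosmetic point: the constant $N$ and the requirement that it be large are superfluous---since no clique contains both $i$ and $j$, each $\Gamma_{C}$ is block diagonal with at most one block $\left(\begin{smallmatrix}1&c\\c&1\end{smallmatrix}\right)$, so any strictly positive diagonal entries, e.g.\ all equal to $1$ as in the paper, already give $\Gamma\in\mathrm{Q}_{\D}$.
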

   \begin{proof}
We proceed using a proof by contradiction embedded in an induction argument. Assume the statement of the theorem is true for any DAG s.t. $ |V| < p$. We shall prove  the theorem for $|V|=p$. The case $p=1$ is trivially true, hence  let  $p\geq 2$. Let  $\D_{[1]}$ denote the induced DAG on  $V\setminus\{1\}$.\\

\noindent
~$\Longrightarrow )$ \quad  Suppose that every partial positive definite matrix over  ${\D}$ can be completed in $\mathrm{PD}_{\D}$. Let $\Gamma^{[1]}$  be an arbitrary element in $\mathrm{Q}_{\D_{[1]}}$.  Now let us define the $\D$- partial matrix $\Gamma$  such that for each $\{i, j\}\in \mathscr{V}$ 
\[
\Gamma_{ij}=
\begin{cases}
1& \text{if $i=1, j=1$,}\\
0&\text{if $i=1,\; j\neq 1$,}\\
\Gamma_{ij}^{[1]}& \text{otherwise.}
\end{cases}
\]  
It is clear that $\Gamma$ is a partial positive definite matrix in $\mathrm{Q}_{\D}$. By assumption, $\Gamma$ can be completed to a positive definite matrix $\Sigma$ in  $\mathrm{PD}_{\D}$. Now note that the principal submatrix $\Sigma_{V\setminus\{1\}}$ is positive definite and satisfies Equation \eqref{eq:DL_normal_repeat} w.r.t.  $\D_{[1]}$. In particular, $\Sigma_{V\setminus\{1\}}$ is the  completion of  $\Gamma^{[1]}$  in   $\mathrm{PD}_{\D_{[1]}}$. By the induction hypothesis this implies that $\D_{[1]}$  is a perfect DAG.  Therefore, to show that $\D$  is perfect, it suffices to show that  $\mathrm{pa}(1)$ is  a complete subset of $V$. On the contrary, if $\mathrm{pa}(1)$ is not complete, then there are non-adjacent vertices $i_{1},j_{1}\in V$   such that  $i_{1}\rightarrow 1\leftarrow j_{1}$. Assume w.l.o.g that  $i_{1}>j_{1}$.  In particular, this implies that $i_{1}$ is a predecessor of $j_{1}$, i.e., $ i_{1}\in \pr(j_{1})$.  Fix an arbitrary number  $\epsilon$ in the open interval $(\sqrt{2}/2, 1)$.  Consider the $\D$-partial matrix  $\Gamma$ that is defined  for each  $\{ i, j\}\in \mathscr{V}$ as
\[
\Gamma_{ij}=
\begin{cases}
1&\text{if \: $i=j$,}\\
\epsilon&\text{if \: $i=1, \; j=j_{1}$ \: or \: $i=i_{1}, \; j=1$,}\\
0& \text{otherwise}
\end{cases}
 \]
 One can  easily check that $\Gamma \in \mathrm{Q}_{\D}$. Let  $\Sigma$  denote the  completion of  $\Gamma$  in  $\mathrm{PD}_{\D}$.  Since $\Sigma_{\pa(j_{1}),  j_{1}}=\Gamma_{\pa(j_{1}), j_{1}}=0$ from Equation \eqref{eq:DL_normal_repeat} we have
 \[
  \Sigma_{\pr(j_{1}), j_{1}}=\Sigma_{\pr(j_{1}),  \pa(j_{1})}(\Sigma_{\pa( j_{1})})^{-1}\Sigma_{\pa(j_{1}), j_{1}}=0. 
  \]
  In particular, $\Sigma_{i_{i}j_{1}}=0$. Therefore
 \[
 \Sigma_{\{1, i_{1}, j_{1}\}}=
 \left(
 \begin{matrix}
 1& \epsilon&\epsilon\\
 \epsilon&1&0\\
 \epsilon& 0& 1
 \end{matrix}
 \right)\succ 0.
 \]
 However, this matrix is positive definite if and only if $\epsilon\in (0,\sqrt{2}/2)$, yielding  a contradiction.\\
 
 $\Longleftarrow)$\quad Let  $\D$ be a perfect DAG and  $\Gamma\in \mathrm{Q}_\D$. Let   $\Gamma^{[1]}$ be  the restriction of $\Gamma$ to  $V\setminus\{1\}$  and  $\Sigma^{[1]}$   the completion of  $\Gamma^{[1]}$  in  $\mathrm{PD}_{\D_{[1]}}$. Now define  $\Sigma$ as follows:  
 \[
\Sigma_{V\setminus\{1\}}=\Sigma^{[1]},\:  \Sigma_{\fa(1), 1}=\Gamma_{\fa(1), 1} \:\; \text{and} \:\; \Sigma_{\pr(1), 1}=\Sigma_{\pr(1),  \pa(1)}(\Sigma_{\pa(1)})^{-1}\Sigma_{\pa(1), 1}.
 \]
 Then using Equation \eqref{eq:schur_positive} when $j=1$  shows that  $\Sigma\succ 0$.
 \end{proof}
 \begin{Rem}
Notice that the $\Longleftarrow)$\: part of Theorem \ref{thm:existence_completion} was also proved in Corollary \ref{cor:completion_per}.
\end{Rem}

 \begin{Rem}
 In the context of undirected graphs,  Grone et al. \cite{Grone1984} prove that every partial positive definite matrix can be completed to a positive definite matrix if and only if the underlying graph is decomposable. Theorem \ref{thm:existence_completion} above is the corresponding result in the DAG context with the caveat that the result in \cite{Grone1984} for undirected graphs does not imply the result below. In particular, \cite{Grone1984} implies that $\D^u$ has to be decomposable if a positive definite completion is to be guaranteed for any arbitrary partial positive definite matrix. The requirement that the graph $\D^u$ be decomposable does not however mean that $\D$ is perfect.  
 
 \end{Rem}

\begin{corollary}\label{cor:completion_UG}
Suppose $\G$ is a decomposable graph. Then every $\Gamma\in \mathrm{Q}_{\G}$ can be completed to a unique $\Sigma$ in $\mathrm{PD}_{\G}$. Consequently, every partial positive definite matrix over a decomposable graph has a positive definite completion.
 \end{corollary}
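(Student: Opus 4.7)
The plan is to reduce the undirected decomposable case to the perfect DAG case that has already been handled, and then invoke Proposition~\ref{prop:equiv} to transport the completion back.

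First I would invoke the fact recorded at the end of \S\ref{subsec:graph}: since $\G$ is decomposable, there exists a perfect DAG $\D$ with $\D^{\mathrm{u}}=\G$. Fix such a $\D$. Because the edge set of $\D$, ignoring orientation, coincides with $\mathscr{V}$, any $\Gamma\in \mathrm{I}_{\G}$ is canonically also a $\D$-partial matrix, and the partial positive definiteness conditions are identical (the cliques of $\D$ coincide with the cliques of $\G$, since these depend only on the skeleton). Hence $\Gamma\in \mathrm{Q}_{\G}$ implies $\Gamma\in \mathrm{Q}_{\D}$.

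Next, by Corollary~\ref{cor:completion_per} applied to the perfect DAG $\D$, the partial matrix $\Gamma$ admits a (unique) completion $\Sigma\in \mathrm{PD}_{\D}$. By Proposition~\ref{prop:equiv}, $\mathrm{PD}_{\D}=\mathrm{PD}_{\G}$, so $\Sigma\in \mathrm{PD}_{\G}$ and $\Sigma$ is the desired completion of $\Gamma$.

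For uniqueness in $\mathrm{PD}_{\G}$, suppose $\Sigma'\in \mathrm{PD}_{\G}$ is another completion of $\Gamma$. Again by Proposition~\ref{prop:equiv}, $\Sigma'\in \mathrm{PD}_{\D}$, and it is also a completion of $\Gamma$ viewed as a $\D$-partial matrix. The uniqueness part of Corollary~\ref{cor:completion_per} (established via Proposition~\ref{prop:completion_in_PDD}) then yields $\Sigma'=\Sigma$. The only subtlety I anticipate is the bookkeeping of making sure that ``completion over $\G$'' and ``completion over $\D$'' truly refer to the same specified positions, but this is immediate since $\mathscr{V}$ is defined in both cases as the undirected skeleton. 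The second sentence of the corollary is then just a restatement of existence.
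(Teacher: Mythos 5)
Your proposal is correct and follows essentially the same route as the paper: pass to a perfect DAG version $\D$ of $\G$, note $\Gamma\in \mathrm{Q}_{\D}$, complete via Corollary~\ref{cor:completion_per}, and transport back using $\mathrm{PD}_{\D}=\mathrm{PD}_{\G}$ (the paper cites Corollary~\ref{cor:per_dec}, which is just Proposition~\ref{prop:equiv} repackaged). Your explicit uniqueness argument via Proposition~\ref{prop:completion_in_PDD} is a slightly more careful spelling-out of what the paper leaves implicit, but it is the same argument.
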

 \begin{proof}
Let $\Gamma$  be in $\mathrm{Q}_{\G}$ and let $\D$ be a perfect DAG version of $\G$. Then $\Gamma\in\mathrm{Q}_{\D}$. The result is immediate by applying Corollary \ref{cor:completion_per} followed by Corollary \ref{cor:per_dec}.
  \end{proof}

There is an interesting contrast  between completing a  given partial positive definite matrix  $\Gamma\in \mathrm{Q}_{D}$ in $\mathrm{PD}_{\G}$ vs. completing it in $\mathrm{PD}_{\D}$. In particular, Theorem 3. in \cite{Grone1984} asserts that $\Gamma\in \mathrm{Q}_{\G}$ can be completed in $\mathrm{PD}_{\G}$ if \emph{any} positive completion exists. A completion in $\mathrm{PD}_{\D}$ is therefore sufficient to guarantee a completion in $\mathrm{PD}_{\G}$. The other way around is not true. In particular, $\Gamma$ may not be completed in $\mathrm{PD}_\D$ even when it can be completed in $\mathrm{PD}_{\G}$. This is because completion in $\mathrm{PD}_{\D}$ is more restrictive than completion in $\mathrm{PD}_{\G}$. We illustrate this distinction in the following example.

 \begin{figure}[htbp]
\begin{center}
\includegraphics[width=2.8cm]{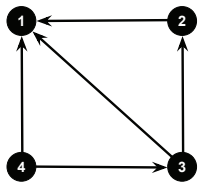}
\caption{A non-perfect DAG from Example \ref{ex:no_completion}}
\label{fig:nonpdag4}
\end{center}
\end{figure}

 \begin{Ex}\label{ex:no_completion}
 Consider the following partial positive definite matrix over the DAG in Figure \ref{fig:nonpdag4}.
 \[
 \Gamma=
 \left(
\begin{matrix}
7 &  12 &  12 &  16\\
  12  & 30 &  28 &  *\\
  12  & 28 &  37 &  32\\
 16 &  * &  32 &  38
\end{matrix}
\right).
\]
Although $\D$  is not a perfect DAG  we have $\G$, the undirected version of  $\D$,  is decomposable and therefore by Corollary \ref{cor:completion_UG} it can be completed to a positive definite matrix  in  $\mathrm{PD}_{\G}$. However, completion of   $\Gamma$  in  $\mathrm{PD}_{\D}$  requires that  $\Sigma_{42}=\Gamma_{43}\Gamma_{33}^{-1}\Gamma_{32}=24.2162$ and one can check that the completed matrix
\[
\left(
\begin{matrix}
7 &  12 &  12 &  16\\
  12  & 30 &  28 &  24.2162\\
  12  & 28 &  37 &  32\\
 16 &  24.2162 &  32 &  38
\end{matrix}
\right)
\]
is not positive definite.  Consequently, $\Gamma$  cannot be completed in $\mathrm{PD}_{\D}$.
 \end{Ex}
 Now suppose that $\G$ is an undirected graph and $\Gamma \in \mathrm{Q}_{\G}$. When $\G$ is decomposable it has a DAG version $\D$ that is perfect. Corollary \ref{cor:completion_per} and the fact that $\mathrm{PD}_{\D} = \mathrm{PD}_{\G}$ (see Proposition \ref{prop:equiv}) imply that $\Gamma$ can be explicitly completed in $\mathrm{PD}_{G}$. However, when $\G$ is not decomposable no algorithm for completing $\Gamma$ in $\mathrm{PD}_{\G}$ is known, unless a positive definite completion in $\mathrm{PD}_{p}(\R)$ is given or guaranteed. Given $\G$, in practice, it is useful to know whether $\Gamma$ can be completed in \emph{some} DAG version of this undirected graph $\G = \D^u$. This is because if $\Gamma$ can be completed w.r.t a DAG version of $\G$, then the completion process given in Proposition \ref{prop:completion_in_PDD} can be exploited to obtain a positive definite completion, and thus ensuring the existence of a completion in $\mathrm{PD}_{\G}$. In the following example we show that even when $\Gamma$ can be completed in $\mathrm{PD}_{\G}$ the completion may still not exist for any DAG version of $\G$.

\begin{figure}[htbp]
\begin{center}
\includegraphics[width=3.5cm]{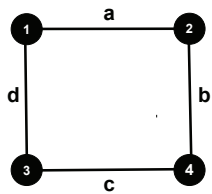}
\caption{Positive definite completion for  $C_{4} $ from Example \ref{cycle_UG_DAG} }
\label{fig:aC4}
\end{center}
\end{figure}
 \begin{Ex} \label{cycle_UG_DAG}
 Consider the partial matrix
 \[
 \Gamma=
 \left(
\begin{matrix}
1&a&d&*\\
a&1&*&b\\
d&*&1&c\\
*&b&c&1
\end{matrix}
\right)
 \]
 over the four cycle  $C_{4}$ as given in Figure \ref{fig:aC4}. It is clear that when $|a|,|b|,|c|,|d|<1$, $\Gamma$ is a partial positive definite matrix over $C_{4}$.  It is shown in \cite{Barrett1993} that $\Gamma$ can be completed to a positive definite matrix  $\Sigma$  if and only if 
 \[
 f(a,b,c,d)=\sqrt{(1-a^{2})(1-b^{2})}+\sqrt{(1-c^{2})(1-d^{2})}-|ab-cd|>0.
 \]
 Now consider the list of the DAG versions of $C_{4}$ as given in Table \ref{table:dC4}. A simple enumeration will demonstrate that the list in Table \ref{table:dC4} is exhaustive and contains all DAG version of  $C_{4}$.

 \end{Ex}
 
\begin{table}[htbp]
\caption{DAG versions of  $C_{4}$ from Example \ref{cycle_UG_DAG}.}
\begin{center}
\begin{tabular}{cccc}
\includegraphics[width=3.1cm]{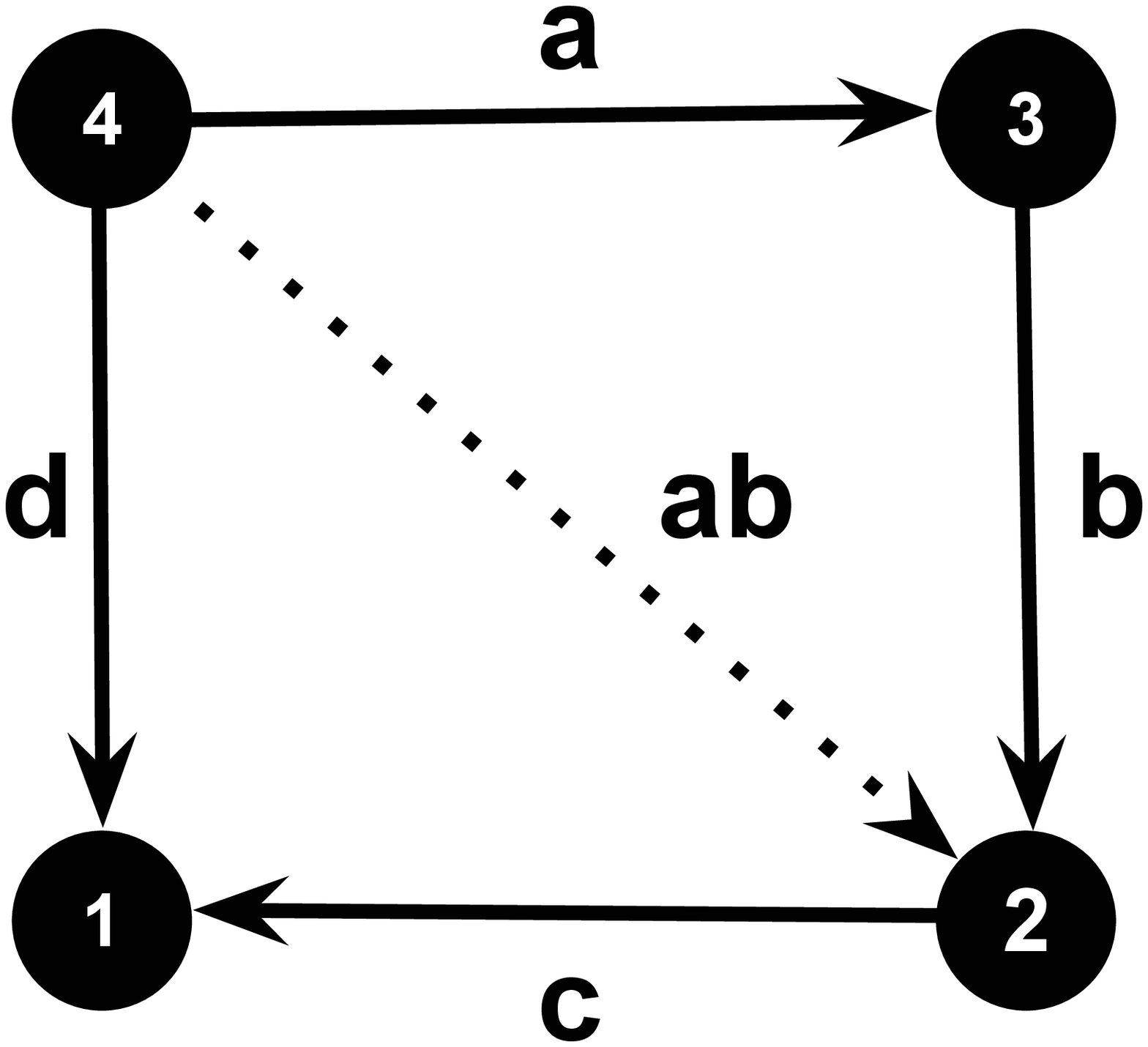}&\includegraphics[width=3.1cm]{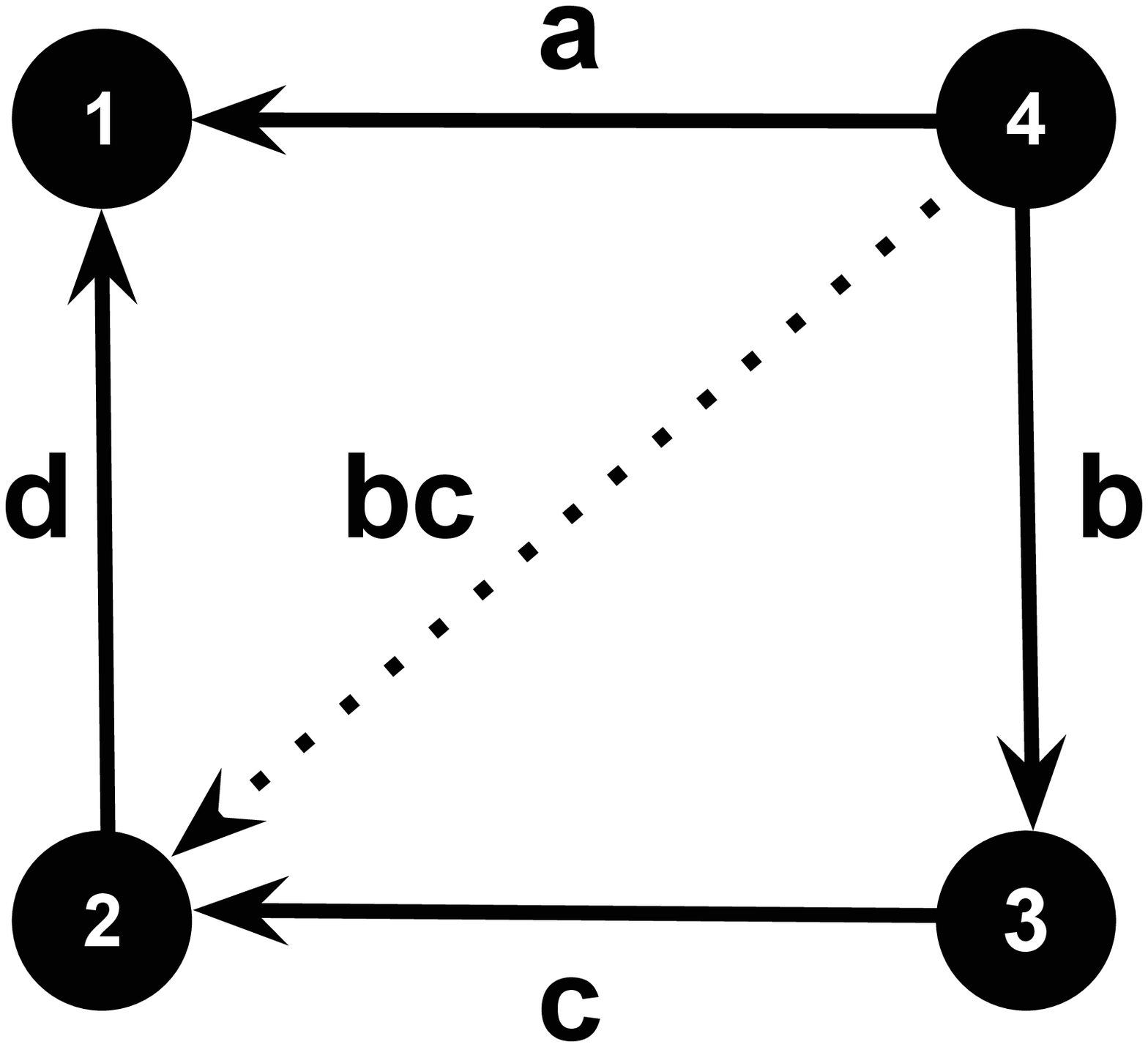}&\includegraphics[width=3.1cm]{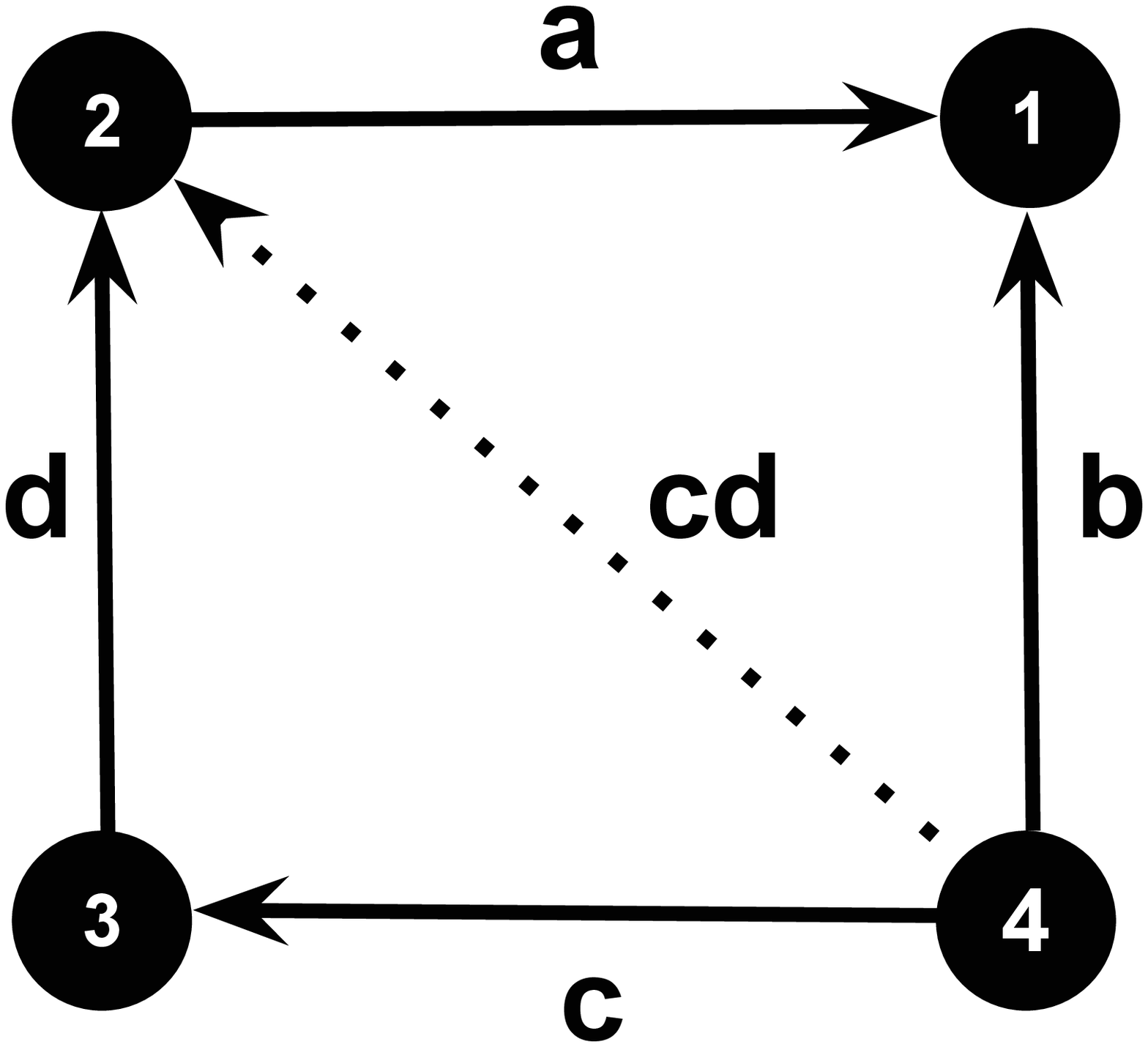}&\includegraphics[width=3.1cm]{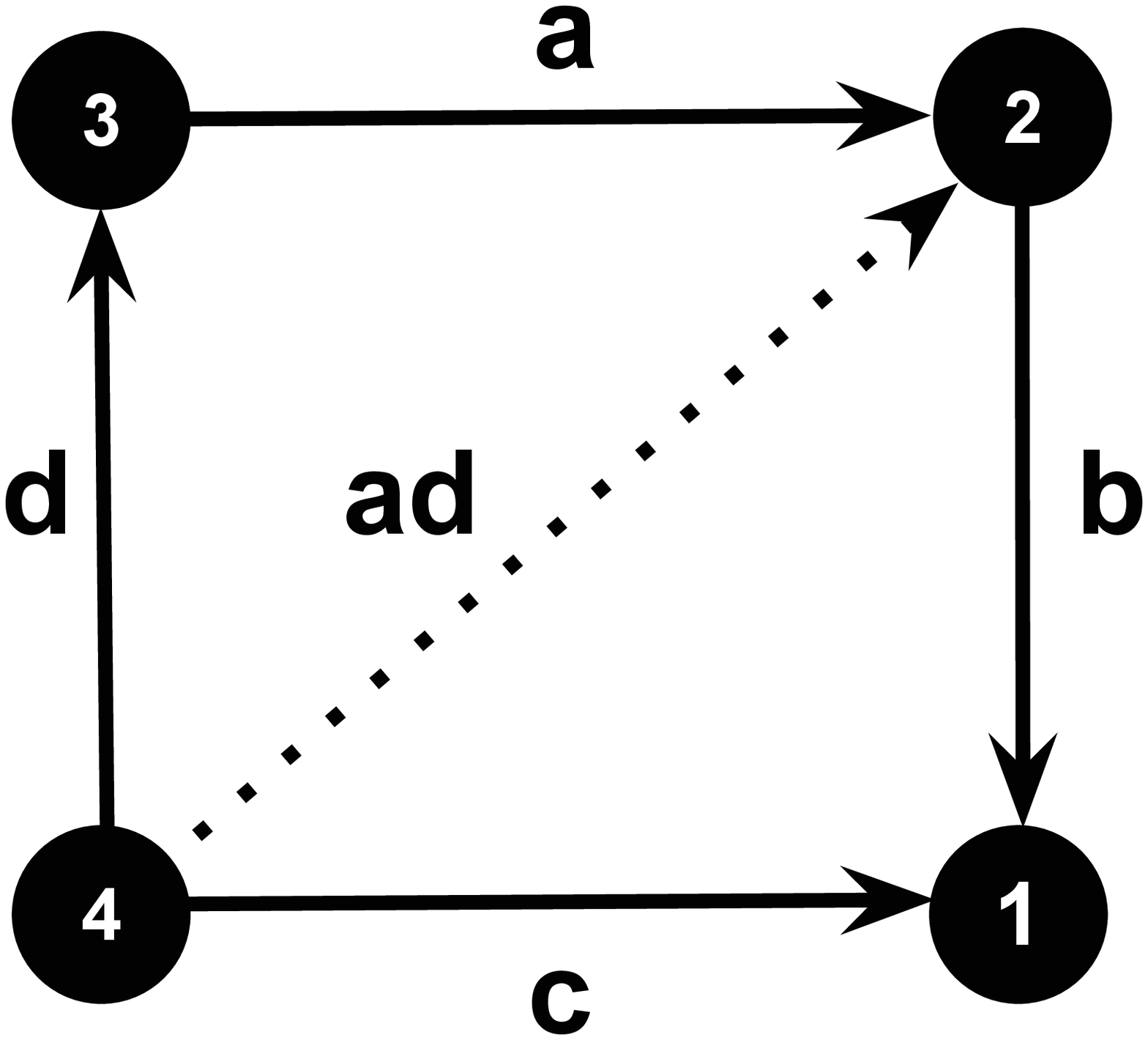}\\
(1)&(2)&(3)&(4)\\
& & &\\
\includegraphics[width=3.1cm]{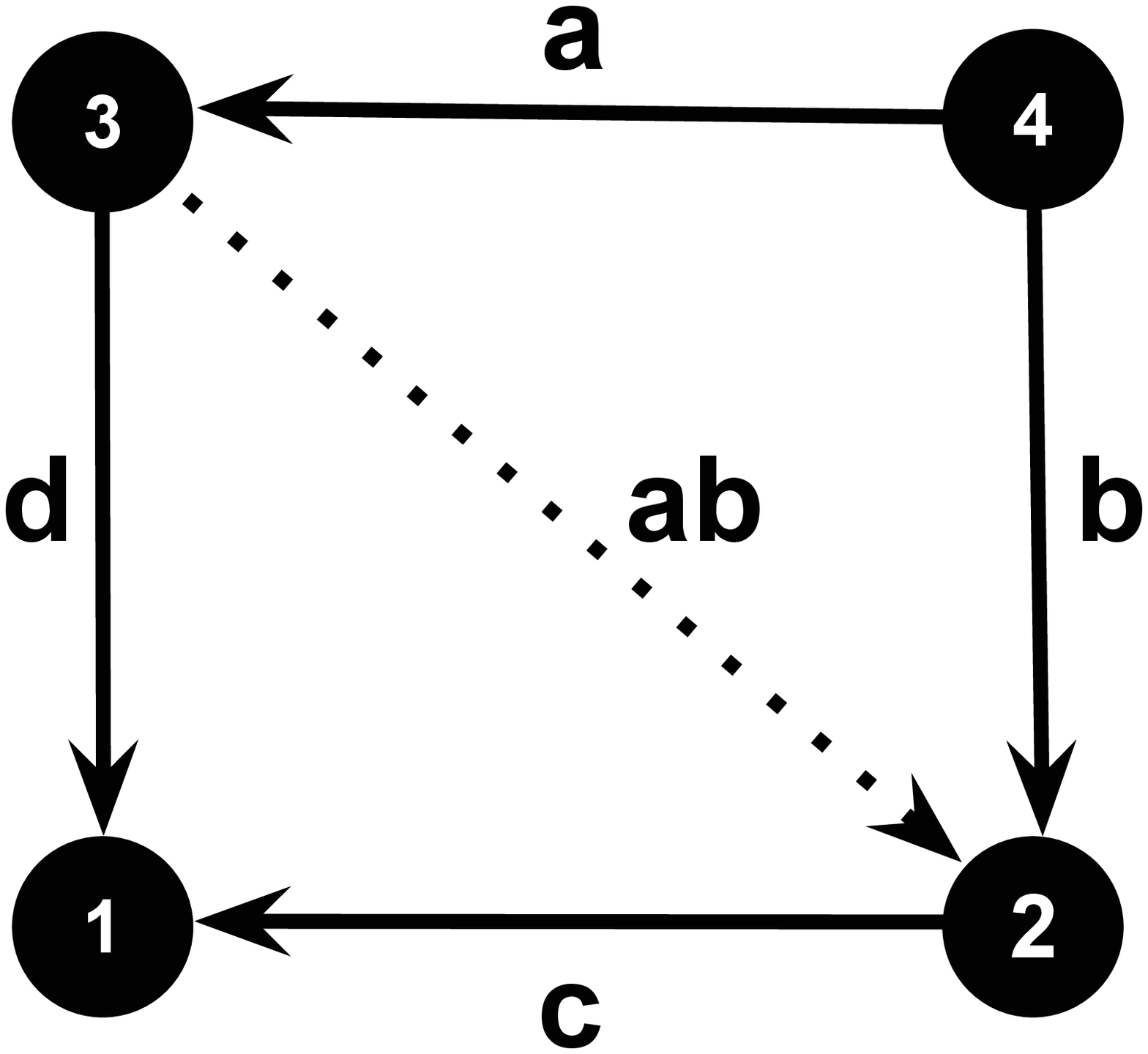}&\includegraphics[width=3.1cm]{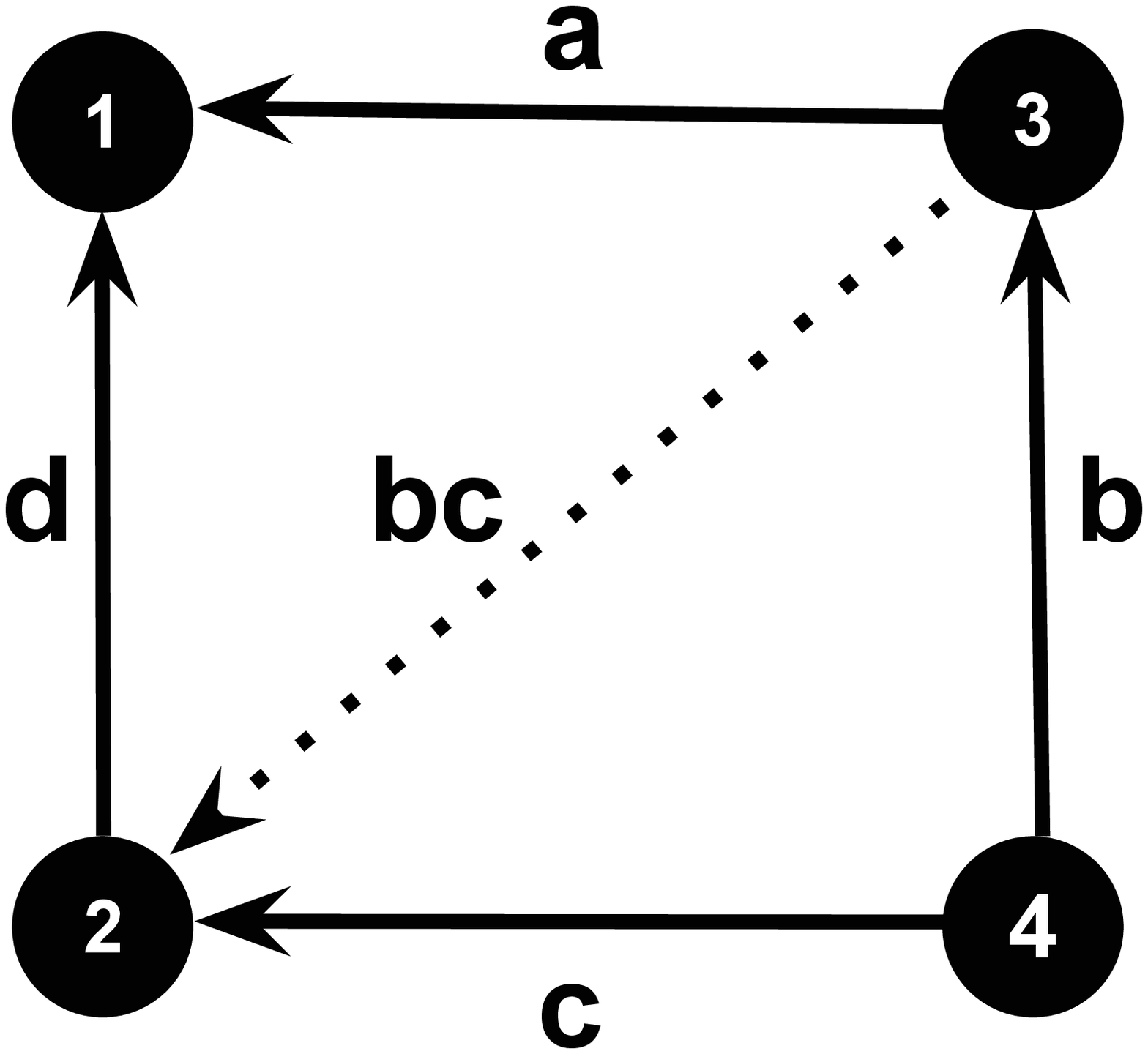}&\includegraphics[width=3.1cm]{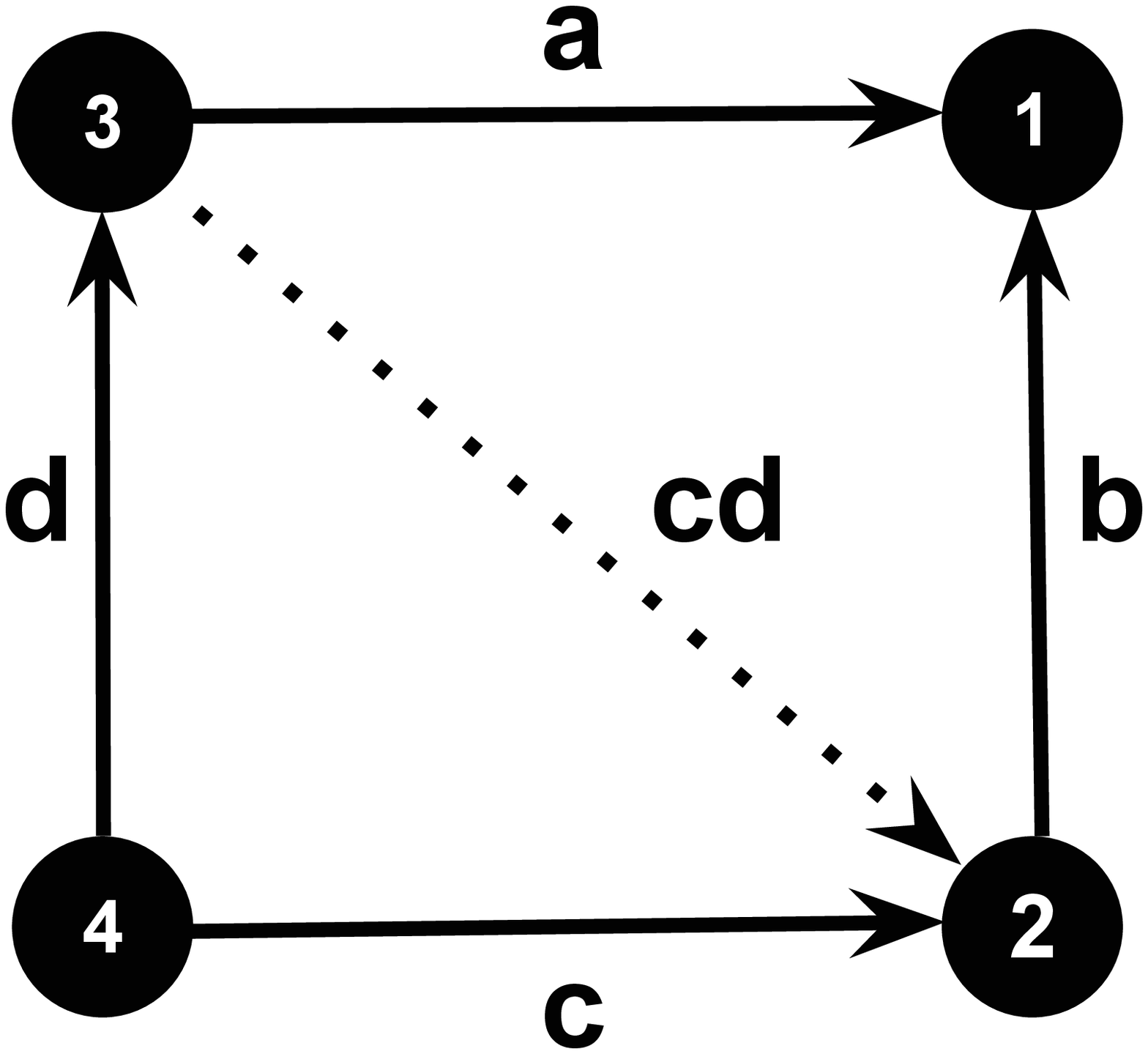}&\includegraphics[width=3.1cm]{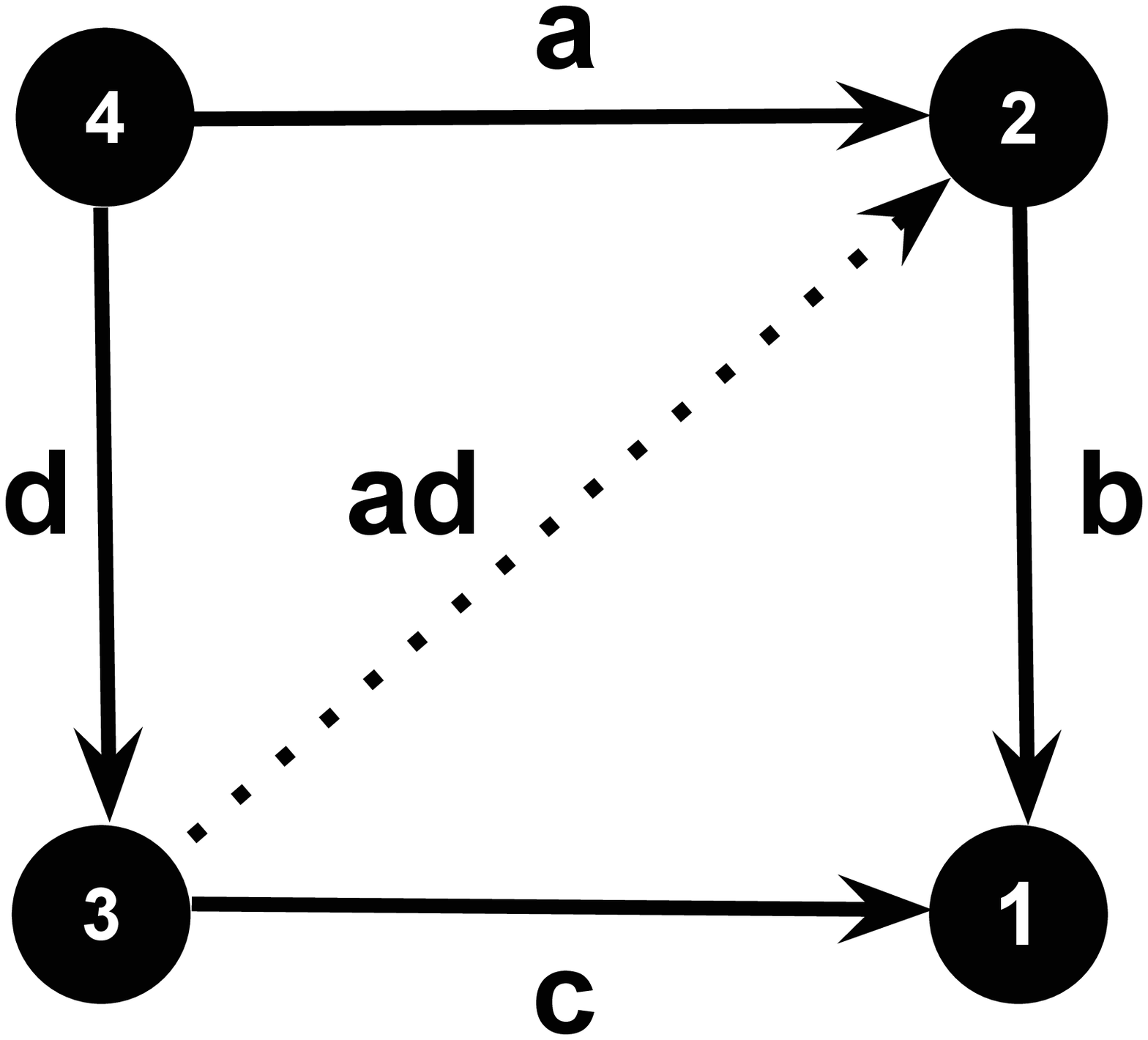}\\

(5)&(6)&(7)&(8)\\
& & & \\
\includegraphics[width=3.1cm]{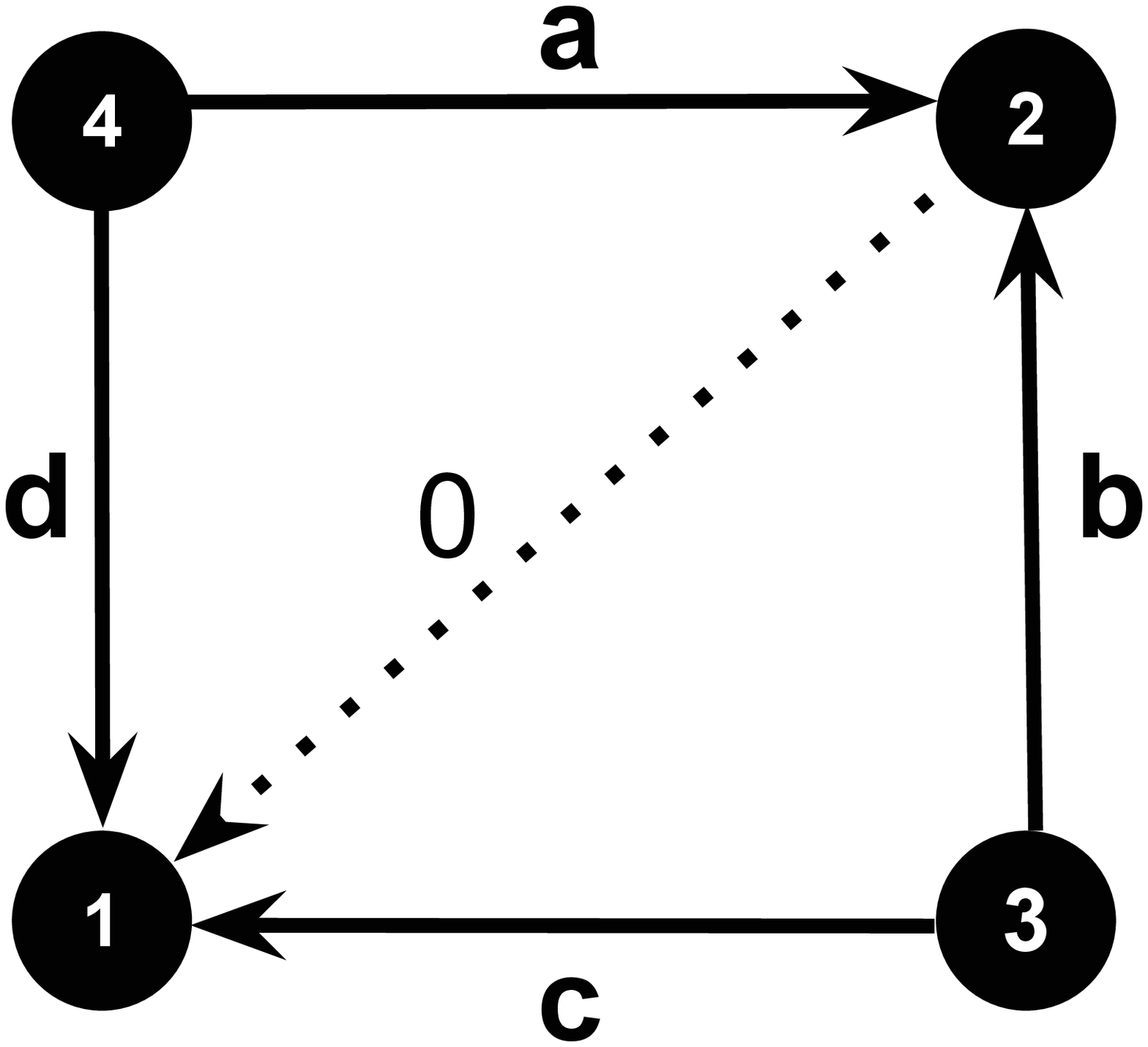}&\includegraphics[width=3.1cm]{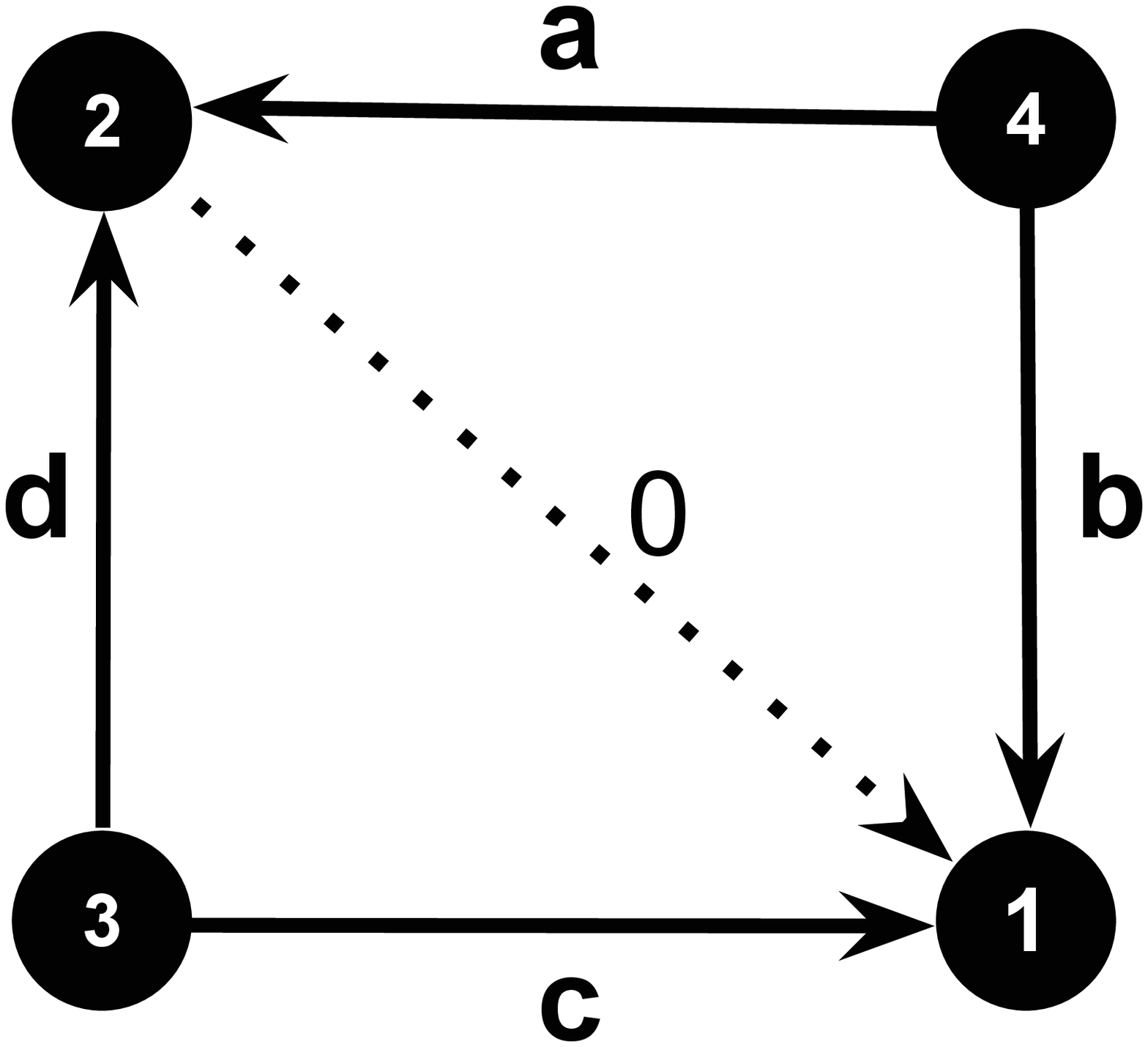}&  &\\
(9)&(10)&&
\end{tabular}
\end{center}
\label{table:dC4}
\end{table}%
\noindent
Table \ref{table:dC4} gives for each DAG version of $C_{4}$ the edges labeled with the corresponding entries of $\Gamma$. The dashed edges and their labels correspond to the missing edges and entries computed by using Equation \eqref{eq:DL_normal_repeat}. For example,  the DAG version in Table \ref{table:dC4} (1) corresponds to the partial matrix 
\[
\Gamma=
\left(
\begin{matrix}
1&c&*&d\\
c&1&b&*\\
*&b&1&a\\
d&*&a&1
\end{matrix}
\right).
\]
 Here the missing entry $\Sigma_{42}$ is computed as $\Sigma_{42}=\Sigma_{\pr(2), 2}=\Gamma_{43}\Gamma_{33}^{-1}\Gamma_{32}=ab$, which is the label on the dashed edge $4\dashrightarrow2$ in Table \ref{table:dC4} (1). Note that once the dashed edges are included all the DAGs in Table \ref{table:dC4} are perfect. Therefore, if the partial matrix over the corresponding perfect DAG in Table \ref{table:dC4} is partial positive definite, then by  Corollary \ref{cor:completion_per} the completion  in $\mathrm{PD}_{\D}$ (where $\D$ is the original DAG version of $C_{4}$) is guaranteed. The above reasoning allows us derive the required system of inequalities that sensure that $\Gamma$ can be completed in $\mathrm{PD}_{\D}$. For example, the partial matrix corresponding to the perfect DAG in Table \ref{table:dC4} (1) is 
\[
A = \left(
\begin{matrix}
1&c&*&d\\
c&1&b&ab\\
*&b&1&a\\
d&ab&a&1
\end{matrix}
\right) .
\]  

A simple calculation will show that the lower right $3 \times 3$ submatrix of $A$ is positive definite. Hence $A$ is  partial positive definite if and only if \:
$
\left(
\begin{matrix}
1&c&d\\
c&1&ab\\
d&ab&1
\end{matrix}
\right)\succ 0 
$. This is equivalent to requiring that 
\[
 f_{1}=(1-c^{2})(1-d^{2})-(ab-cd)^{2}>0.
\]
Similarly, we can show that the partial matrix corresponding to each of the perfect DAGs given in Table \ref{table:dC4} (2),  (3)  or  (4) respectively, is partial positive definite if and only if
\begin{align*}
f_{2}:&=(1-a^{2})(1-d^{2})-(bc-ad)^{2}>0,\\
 f_{3}:&=(1-a^{2})(1-b^{2})-(cd-ab)^{2}>0, \; \text{or}\\
 f_{4}:&=(1-b^{2})(1-c^{2})-(ad-bc)^{2}>0.
\end{align*}
It is easy to check from Table \ref{table:dC4}  that the inequalities obtained under the perfect DAGs given in Table \ref{table:dC4} (5), (6), (7) and (8)  are the same inequalities already listed above. The partial matrix corresponding to each of the perfect DAGs given in Table \ref{table:dC4} (9) or (10) respectively is,  partial positive definite if and only if
\begin{align*}
f_{5}:&=\min\left((1-b^{2})(1-c^{2})-(bc)^{2}, (1-a^{2})(1-d^{2})-(ad)^{2}\right)>0, \; \text{or} \\
f_{6}:&=\min\left((1-a^{2})(1-b^{2})-(ab)^{2}, (1-c^{2})(1-d^{2})-(cd)^{2}\right)>0.
\end{align*}
Now consider the following choices for $a, b, c, d$:  $a=0.6$,\; $b=0.9$,\;  $c=0.1$,\;  and \; $d=0.9$. Then we have
 $f(0.6, 0.9,  0.1,  0.9)= 0.3324$,  but
\[
\begin{array}{ll}
f_{1}(0.6, 0.9, 0.1 ,  0.9)=  -0.01&f_{2}(0.6, 0.9, 0.1,  0.9)=  -0.08\\
f_{3}(0.6, 0.9, 0.1, 0.9)=-0.08&f_{4}(0.6, 0.9, 0.1,  0.9)=-0.01\\
f_{5}(0.6, 0.9, 0.1, 0.9)=-0.17 & f_{6}(0.6, 0.9, 0.1, 0.9)=-0.17.
\end{array}
\]
Hence when the above values for $a, b, c, d$ are substituted in $\Gamma$ we obtain a partial positive definite matrix that cannot be completed in $\mathrm{PD}_{\D}$ for any DAG version  $\D$  of   $C_{4}$, although it can be completed in $\mathrm{PD}_{C_{4}}$. 
\section{Computing the inverse and determinant of the completion of an Incomplete matrix}\label{sec:det_inv}

In this section we give closed form expressions for the inverse and the determinant of a completed matrix $\mathrm{PD}_{\D}$ as a function of only the elements of the corresponding partial matrix. First we need the following notion for undirected graphs.\\
\begin{definition}
Let $\G=(V,\mathscr{V})$ be an arbitrary undirected graph.
\begin{itemize}
\renewcommand{\labelitemi}{$(i)$}
\item  For three disjoint subsets $A, B$ and $S$ of $V$ we say that $S$ separates $A$  from $B$ in $\G$ if 
 every path from a vertex in $A$ to a vertex in $B$ intersects a vertex in $S$.
 \renewcommand{\labelitemi}{$(ii)$}
 \item  Let $\Gamma$ be a $\G$-partial matrix. The zero-fill-in of $\Gamma$ in $\G$, denoted by $\left[\Gamma\right]^{V}$,
 is a $|V|\times |V|$ matrix $T$  such that
 \[
 T_{ij}=
 \begin{cases}
 \Gamma_{ij}& \text{if $\{i,j\}\in \mathscr{V}$,}\\
 0&\text{otherwise.}
  \end{cases} 
  \]
  \end{itemize}
 \end{definition}
\noindent
We now invoke a lemma required in the proof of the main result in this section.
\begin{lemma}\label{lem:inv_det}
Let $\D=(V, \mathscr{E})$ be an arbitrary DAG and let $\G=(V,\mathscr{V})$ denote the undirected version of $\D$. Let
$\Sigma\in \mathrm{PD}_{\D}$ and  let $(A, B, S)$ be a partition of $V$  such that $S$ separates $A$ from $B$ 
in $\D^{\mathrm{m}}$.  Then we have\\
~$(a)\quad \Sigma^{-1}=\left[ \left(\Sigma_{A\cup S}\right)^{-1}\right]^{V}+\left[ \left(\Sigma_{B\cup S}\right)^{-1}\right]^{V}
-\left[ \left(\Sigma_{ S}\right)^{-1}\right]^{V}$ and\\
~$(b)\quad \det(\Sigma^{-1})=\dfrac{\det(\Sigma_{S})}{\det(\Sigma_{A\cup S})\det(\Sigma_{B\cup S})}.$
\end{lemma}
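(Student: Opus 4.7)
The plan is to reduce (a) and (b) to an algebraic identity via Lemma \ref{lem:wermuth}, and then verify both by exhibiting a convenient block $LDL'$ factorization of $\Sigma$.

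First I would use Lemma \ref{lem:wermuth} to upgrade $\Sigma\in\mathrm{PD}_{\D}$ to $\Sigma\in\mathrm{PD}_{\D^{\mathrm{m}}}$, so that $(\Sigma^{-1})_{ij}=0$ whenever $\{i,j\}$ is not an edge of $\D^{\mathrm{m}}$. Since $S$ separates $A$ from $B$ in $\D^{\mathrm{m}}$ and $(A,B,S)$ partitions $V$, no edge of $\D^{\mathrm{m}}$ joins $A$ to $B$, so the entire block $(\Sigma^{-1})_{A,B}$ vanishes. Applying the block inverse formula to the partition $V=(A\cup B)\cup S$ gives
\[
(\Sigma^{-1})_{A\cup B,\,A\cup B}=\bigl(\Sigma_{A\cup B}-\Sigma_{A\cup B,S}\,\Sigma_S^{-1}\,\Sigma_{S,A\cup B}\bigr)^{-1},
\]
whose $(A,B)$ block therefore vanishes; since inversion preserves block-diagonality, the $(A,B)$ block of the Schur complement itself must be zero. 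Reading off that block yields the crucial identity
\[
\Sigma_{A,B}=\Sigma_{A,S}\,\Sigma_S^{-1}\,\Sigma_{S,B}.
\]

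With this identity in hand one obtains the factorization $\Sigma=LDL'$, where in the block order $A,S,B$ the block-diagonal matrix $D$ equals $\mathrm{diag}(\Sigma_{A|S},\,\Sigma_S,\,\Sigma_{B|S})$ and $L$ is block unit lower triangular with $(A,S)$-block $\Sigma_{A,S}\Sigma_S^{-1}$, $(B,S)$-block $\Sigma_{B,S}\Sigma_S^{-1}$, and $(B,A)$-block $0$. The vanishing of the $(B,A)$ block of $L$ is precisely the content of the algebraic identity above. For part (a), I would write $\Sigma^{-1}=(L')^{-1}D^{-1}L^{-1}$ and expand block by block. Standard block-inversion shows that the $(A,A)$, $(A,S)$ and $(S,A)$ blocks of $\Sigma^{-1}$ coincide with the corresponding blocks of $\Sigma_{A\cup S}^{-1}$ (embedded into $V\times V$ with zeros elsewhere); by the $A\leftrightarrow B$ symmetry the same holds for $\Sigma_{B\cup S}^{-1}$; the $(A,B)$ and $(B,A)$ blocks vanish on both sides; and the $(S,S)$ block receives contributions from both embedded inverses, in which the common summand $\Sigma_S^{-1}$ appears twice and is corrected by the $-[\Sigma_S^{-1}]^V$ term. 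This matches the formula in (a).

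For part (b), taking determinants in $\Sigma=LDL'$ and using that $L$ is block unit triangular yields $\det\Sigma=\det(\Sigma_{A|S})\det(\Sigma_S)\det(\Sigma_{B|S})$. Combined with the Schur-complement determinant identities $\det(\Sigma_{A\cup S})=\det(\Sigma_S)\det(\Sigma_{A|S})$ and $\det(\Sigma_{B\cup S})=\det(\Sigma_S)\det(\Sigma_{B|S})$, we obtain $\det\Sigma=\det(\Sigma_{A\cup S})\det(\Sigma_{B\cup S})/\det(\Sigma_S)$, and (b) follows by taking reciprocals. The main obstacle is the careful bookkeeping in the $(S,S)$ block of part (a), where the three zero-padded summands must combine without over- or under-counting; all of the conceptual steps (Lemma \ref{lem:wermuth}, separation forcing a vanishing block of $\Sigma^{-1}$, and the Schur-complement reading of Gaussian conditional independence) are standard once that identity is established.
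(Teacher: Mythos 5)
Your proof is correct, but it takes a genuinely different route from the paper. The paper disposes of this lemma in one line: after invoking Lemma \ref{lem:wermuth} to place $\Sigma$ in $\mathrm{PD}_{\D^{\mathrm{m}}}$, it simply cites Lemma 5.5 of \cite{Lauritzen1996}, which is precisely statements $(a)$ and $(b)$ for an undirected graph in which $S$ separates $A$ from $B$. You perform the same opening reduction (Wermuth's lemma plus the pairwise Markov property, Equation \eqref{eq:graphical_model_prop}, give $(\Sigma^{-1})_{A,B}=0$ because no edge of $\D^{\mathrm{m}}$ joins $A$ to $B$), but then you reprove Lauritzen's lemma from scratch: the vanishing inverse block forces, via the Schur complement of $\Sigma_S$, the identity $\Sigma_{A,B}=\Sigma_{A,S}\Sigma_S^{-1}\Sigma_{S,B}$, which in turn yields the block factorization $\Sigma=LDL'$ with $D=\mathrm{diag}(\Sigma_{A|S},\Sigma_S,\Sigma_{B|S})$, from which $(a)$ and $(b)$ follow by direct computation. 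I verified the block bookkeeping: in particular the $(S,S)$ block of $(L')^{-1}D^{-1}L^{-1}$ is $\Sigma_S^{-1}+\Sigma_S^{-1}\Sigma_{S,A}\Sigma_{A|S}^{-1}\Sigma_{A,S}\Sigma_S^{-1}+\Sigma_S^{-1}\Sigma_{S,B}\Sigma_{B|S}^{-1}\Sigma_{B,S}\Sigma_S^{-1}$, which is exactly what the three zero-filled summands in $(a)$ produce once the doubly counted $\Sigma_S^{-1}$ is subtracted, so there is no over- or under-counting. Two cosmetic points only: the $L$ you describe is not lower triangular in your stated block order $A,S,B$ (the $(A,S)$ block sits above the diagonal); it is unit lower triangular in the order $S,A,B$, which costs nothing since $\det L=1$ either way. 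Also, concluding block-diagonality of the Schur complement needs its inverse's $(B,A)$ block to vanish as well, which is immediate from the symmetry of $\Sigma^{-1}$. As for what each approach buys: the paper's citation is maximally economical and leans on a standard reference, while your argument makes the lemma self-contained and exposes exactly where the separation hypothesis enters --- only through the single identity $\Sigma_{A,B}=\Sigma_{A,S}\Sigma_S^{-1}\Sigma_{S,B}$.
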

\begin{proof}
Since by Lemma \ref{lem:wermuth} $\mathrm{PD}_{\D}\subseteq \mathrm{PD}_{\D^{\mathrm{m}}}$ the proof directly follows from Lemma 5.5 in \cite{Lauritzen1996}.
\end{proof}
\begin{proposition}\label{prop:inv_det}
Let $\Gamma$ be a partial positive definite matrix in $\mathrm{Q}_{\D}$ that can be completed to a positive 
definite matrix $\Sigma$ in $\mathrm{PD}_{\D}$. Then we have:\\
$~(i)\quad
 \Sigma^{-1}=\sum_{i=1}^{p}\left(\left[\left(\Sigma_{\fa(i)}\right)^{-1}\right]^{V}-\left[\left(\Sigma_{\pa(i)}\right)^{-1}\right]^{V}\right)$\:\:and\\
$~(ii)\quad\det(\Sigma^{-1})=\dfrac{\prod_{i=1}^{p} \det(\Sigma_{\pa(i)})}{\prod_{i=1}^{p} \det(\Sigma_{\fa(i)})}=\prod_{i=1}^{p}\Sigma_{ii|\pa(i)}^{-1}$.
\begin{proof}
Suppose that by mathematical induction the assertion of the proposition holds for any DAG with number of vertices less than $p$. We proceed to prove the proposition for $\D$ with $p$ vertices. The case $p=1$ holds trivially. Thus assume that $p>1$. Let  $V_{[1]}$ denote $V\setminus\{1\}$. The triple $\left(V\setminus \fa(1), \{1\}, \pa(1)\right)$ is a partition of $V$, and $\pa(1)$ separates  $\{ 1\}$ from  $V\setminus \fa(1)$ in $\D^{\mathrm{m}}$. Thus by Lemma \ref{lem:inv_det}, 
\begin{equation}\label{eq:induction_step1}
\Sigma^{-1}=\left[\left(\Sigma_{V\setminus \{1\}}\right)^{-1}\right]^{V} + \left[  \left(\Sigma_{\fa(1)}\right)^{-1} \right]^{V}-\left[\left(\Sigma_{\pa(1) }\right)^{-1} \right]^{V}. 
\end{equation}
Invoking the same notation used in the proof of Theorem \ref{thm:existence_completion}, note that the partial matrix $\Gamma^{[1]}$ is positive definite over $\D_{[1]}$ with positive completion  $\Sigma_{V\setminus\{1\}}$ in $\mathrm{PD}_{\D_{[1]}}$. By the induction hypothesis applied to $\Psi=\Sigma_{V\setminus\{1\}}$ we have
\begin{equation}\label{eq:induction_step}
\left(\Sigma_{V\setminus \{1\}}\right)^{-1}=\Psi^{-1}=\sum_{i=2}^{p}\left(\left[\left(\Psi_{\fa(i)}\right)^{-1}\right]^{V_{[1]}}-\left[\left(\Psi_{\pa(i)}\right)^{-1}\right]^{V_{[1]}}\right).
\end{equation}
Since $\D_{[1]}$ is an ancestral subgraph of $\D$, it is clear that for each $i\in V\setminus\{1\}$ we have
\[
\Psi_{\fa(i)}=\Sigma_{\fa(i)}\:\:\text{and}\:\:  \Psi_{\pa(i)}=\Sigma_{\pa(i)}.
\]
By replacing these in Equation \eqref{eq:induction_step} and zero-fill-in in $\D^{\mathrm{u}}$ we obtain
\begin{equation}\label{eq:Sigma}
\left[\left(\Sigma_{V\setminus \{1\}}\right)^{-1}\right]^{V}=\sum_{i=2}^{p}\left(\left[\left(\Sigma_{\fa(i)}\right)^{-1}\right]^{V}-\left[\left(\Sigma_{\pa(i)}\right)^{-1}\right]^{V}\right).
\end{equation}
Finally, substituting Equation \eqref{eq:Sigma} into Equation \eqref{eq:induction_step1} yields the formula in part $(i)$. Part $(ii)$ follows similarly by using  part $(2)$ in Lemma \ref{lem:inv_det} .
\end{proof}
\end{proposition}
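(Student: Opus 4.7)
The natural strategy is induction on $p = |V|$, using Lemma \ref{lem:inv_det} as the inductive engine. The base case $p=1$ is immediate from the conventions $\Sigma_{\pa(1)} = 1$ and $\Sigma_{\pa(1),1} = 0$: both formulas reduce to the trivial identity on the $1 \times 1$ matrix $\Sigma_{11}$. For the inductive step, the key decision is which vertex to peel off so that Lemma \ref{lem:inv_det} applies cleanly. I would peel off vertex $1$, the smallest label. By the ordering convention ($i \to j$ implies $i > j$), vertex $1$ has no children, and hence it belongs to no immorality whose moralization would attach a new edge to it. Consequently its neighborhood in $\D^{\mathrm{m}}$ is exactly $\pa(1)$, so the partition $(V\setminus\fa(1),\{1\},\pa(1))$ of $V$ satisfies the hypothesis of Lemma \ref{lem:inv_det}: the set $\pa(1)$ separates $\{1\}$ from $V\setminus\fa(1)$ in the moral graph.

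Applying part $(a)$ of the lemma yields
\[
\Sigma^{-1} = \left[(\Sigma_{V\setminus\{1\}})^{-1}\right]^{V} + \left[(\Sigma_{\fa(1)})^{-1}\right]^{V} - \left[(\Sigma_{\pa(1)})^{-1}\right]^{V},
\]
while part $(b)$ produces the analogous identity $\det(\Sigma^{-1}) = \det(\Sigma_{\pa(1)})/[\det(\Sigma_{\fa(1)})\det(\Sigma_{V\setminus\{1\}})]$. I would then view $\Sigma_{V\setminus\{1\}}$ as the completion in $\mathrm{PD}_{\D_{[1]}}$ of the restricted partial matrix $\Gamma^{[1]}$: positive definiteness is automatic for a principal submatrix, and because vertex $1$ has no children, $\pa_{\D_{[1]}}(i) = \pa_{\D}(i)$ and $\fa_{\D_{[1]}}(i) = \fa_{\D}(i)$ for all $i \geq 2$, so the DAG-Markov equations over $\D_{[1]}$ are inherited from those over $\D$. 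The induction hypothesis applied to $\Sigma_{V\setminus\{1\}}$, then zero-filled into $V$, supplies the sum over $i \geq 2$, which combines with the lemma's contribution at $i=1$ to give formula $(i)$. For formula $(ii)$ the induction hypothesis telescopes with the factor produced by part $(b)$, and the final equality $\prod_i \det(\Sigma_{\pa(i)})/\det(\Sigma_{\fa(i)}) = \prod_i \Sigma_{ii|\pa(i)}^{-1}$ is the Schur-complement identity $\det(\Sigma_{\fa(i)}) = \det(\Sigma_{\pa(i)})\cdot \Sigma_{ii|\pa(i)}$ applied factor by factor.

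The only genuinely subtle point is verifying the separator condition for $\pa(1)$ in $\D^{\mathrm{m}}$; this is what forces peeling from the bottom rather than the top of the ordering, and depends crucially on vertex $1$ having no children (a root vertex, by contrast, can easily have moralization edges bypassing its parents, so the symmetric strategy would fail). Once the separation is secured, the remainder is routine bookkeeping: confirming that family and parent sets are preserved under deletion of vertex $1$, and that the zero-fill-in operations over $\D^{\mathrm{u}}$ and $\D_{[1]}^{\mathrm{u}}$ agree after embedding into $V$.
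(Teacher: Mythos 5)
Your proposal is correct and follows essentially the same route as the paper's own proof: induction on $p$, peeling off the childless vertex $1$, applying Lemma \ref{lem:inv_det} to the partition $\left(V\setminus\fa(1),\{1\},\pa(1)\right)$, and then invoking the induction hypothesis on $\Sigma_{V\setminus\{1\}}$ as the completion of $\Gamma^{[1]}$ in $\mathrm{PD}_{\D_{[1]}}$, with part $(ii)$ obtained by telescoping the determinant identity. Your explicit verification of the separator condition (vertex $1$ has no children under the ordering convention, so its neighborhood in $\D^{\mathrm{m}}$ is exactly $\pa(1)$) and of the preservation of $\pa(i)$ and $\fa(i)$ under deletion of vertex $1$ usefully spells out steps the paper asserts without proof.
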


\begin{Rem}
We note that Part $(ii)$ of Proposition \ref{prop:inv_det} can also be proved using probabilistic arguments (see \cite{Andersson1998, Bendavid2011}).
\end{Rem}

\begin{figure}[htbp]
\begin{center}
\includegraphics[width=3.1cm]{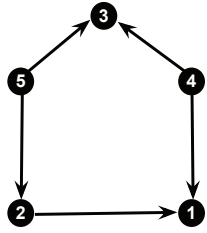}
\caption{The DAG from Example \ref{ex:inv_det} for which $\Sigma^{-1}$ is computed using Proposition \ref{prop:inv_det}}
\label{fig:inv_ex}
\end{center}
\end{figure}
\begin{Ex}\label{ex:inv_det}
Let $\D$ be the DAG given in Figure \ref{fig:inv_ex}.\\
~$(a)$~ Suppose that the  $\D$-partial matrix 
\[
\Gamma=
\left(
\begin{matrix}
1&\Sigma_{12}&*&\Sigma_{14}&*\\
\Sigma_{21}&1&*&*&\Sigma_{25}\\
*&*&1&\Sigma_{34}&\Sigma_{35}\\
\Sigma_{41}&*&\Sigma_{43}&1&*\\
*&\Sigma_{52}&\Sigma_{53}&*&1
\end{matrix}
\right)
\]
can be completed to a positive definite matrix $\Sigma$ in $\mathrm{PD}_{\D}$. By applying part $(i)$ of Proposition \ref{prop:inv_det}
we have
\begin{align*}
\Sigma^{-1}&=\left[(\Sigma_{\{1,2,4\}})^{-1}\right]^{V}+\left[(\Sigma_{\{2,5\}})^{-1}\right]^{V}+
\left[(\Sigma_{\{3,4,5\}})^{-1}\right]^{V}+\left[\Sigma_{44}^{-1}\right]^{V}\\
&+\left[\Sigma_{55}^{-1}\right]^{V}-\left[(\Sigma_{\{2,4\}})^{-1}\right]^{V}-\left[\Sigma_{55}^{-1}\right]^{V}
-\left[(\Sigma_{\{4,5\}})^{-1}\right]^{V}.
\end{align*}
Note that all the entries of the matrices involved in this expression are given in $\Gamma$, except for $\Sigma_{54}$  and $\Sigma_{42}$. 
By using Equation \eqref{eq:DL_normal_repeat} it is easy to check that $\Sigma_{54}=\Sigma_{42}=0$. Hence, 
\begin{align*}
\Sigma^{-1}&=
\left[
\left(
\begin{matrix}
1& \Sigma_{12}&\Sigma_{14}\\
\Sigma_{21}&1&0\\
\Sigma_{41}&0&1
\end{matrix}
\right)^{-1}
\right]^{V}+
\left[
\left(
\begin{matrix}
1&\Sigma_{25}\\
\Sigma_{52}&1
\end{matrix}
\right)^{-1}
\right]^{V}+
\left[
\left(
\begin{matrix}
1& \Sigma_{34}&\Sigma_{35}\\
\Sigma_{43}&1&0\\
\Sigma_{53}&0&1
\end{matrix}
\right)^{-1}
\right]^{V}\\
&+
\left(
\begin{matrix}
0&0&0&0&0\\
0&0&0&0&0\\
0&0&0&0&0\\
0&0&0&1&0\\
0&0&0&0&0
\end{matrix}
\right)-
\left(
\begin{matrix}
0&0&0&0&0\\
0&1&0&0&0\\
0&0&0&0&0\\
0&0&0&1&0\\
0&0&0&0&0
\end{matrix}
\right)-
\left(
\begin{matrix}
0&0&0&0&0\\
0&0&0&0&0\\
0&0&0&0&0\\
0&0&0&1&0\\
0&0&0&0&1
\end{matrix}
\right)\\
&=
\frac{1}{1-\Sigma_{12}^{2}-\Sigma_{14}^{2}}\left(
\begin{matrix}
1&-\Sigma_{12}  &0 &-\Sigma_{14} &0 \\
-\Sigma_{12}&1-\Sigma_{14}^{2}  &0 &\Sigma_{12}\Sigma_{14} &0 \\
0& 0 & 0& 0&0 \\
-\Sigma_{14}&\Sigma_{12}\Sigma_{14}  & 0&1-\Sigma_{12}^{2} &0 \\
0&0  &0 &0 &0 
\end{matrix}
\right)
+\frac{1}{1-\Sigma_{25}^{2}}
\left(
\begin{matrix}
0&0&0&0&0\\
0&1&0&0&-\Sigma_{25}\\
0&0&0&0&0\\
0&0&0&0&0\\
0&-\Sigma_{25}&0&0&1
\end{matrix}
\right)\\
&+
\frac{1}{1-\Sigma_{34}^{2}-\Sigma_{35}^{2}}
\left(
\begin{matrix}
0&0  & 0&0 &0 \\
0 &  0&0 &0 &0 \\
0 &0 &1&-\Sigma_{34}  & -\Sigma_{35} \\
0 & 0 & -\Sigma_{34} &1-\Sigma_{35} ^{2} &\Sigma_{34} \Sigma_{35}  \\
0 & 0 & -\Sigma_{35} &\Sigma_{34} \Sigma_{35}  &1-\Sigma_{34} ^{2} 
\end{matrix}
\right)
+\left(
\begin{matrix}
0 & 0 & 0& 0&0 \\
 0&-1  &0 &0 &0 \\
0 &0  &0 &0 &0 \\
0 &0  &0 &-1 &0 \\
0 &0  & 0&0 &-1 
\end{matrix}
\right).
\end{align*}
By combining these terms into one matrix we have $\Sigma^{-1}$ is equal to:
\begin{align*}
&
\left(
\begin{matrix}
 \frac{1}{1-\Sigma_{12}^{2}-\Sigma_{14}^{2}}& \frac{-\Sigma_{12}}{1-\Sigma_{12}^{2}-\Sigma_{14}^{2}}&0 &\frac{-\Sigma_{14}}{1-\Sigma_{12}^{2}-\Sigma_{14}^{2}} &0 \\
  \frac{-\Sigma_{12}}{1-\Sigma_{12}^{2}-\Sigma_{14}^{2}}&\frac{1-\Sigma_{14}^{2}}{1-\Sigma_{12}^{2}-\Sigma_{14}^{2}}+\frac{1}{1-\Sigma_{25}^{2}}-1 &0 &\frac{\Sigma_{12}\Sigma_{14}}{1-\Sigma_{12}^{2}-\Sigma_{14}^{2}} & \frac{-\Sigma_{25}}{1-\Sigma_{25}^{2}}\\
0  & 0&\frac{1}{1-\Sigma_{34}^{2}-\Sigma_{35}^{2}} & \frac{-\Sigma_{34}}{1-\Sigma_{34}^{2}-\Sigma_{35}^{2}}&\frac{-\Sigma_{35}}{1-\Sigma_{34}^{2}-\Sigma_{35}^{2}} \\
    \frac{-\Sigma_{14}}{1\Sigma_{12}^{2}-\Sigma_{14}^{2}}&\frac{\Sigma_{12}\Sigma_{14}}{1-\Sigma_{12}^{2}-\Sigma_{14}^{2}} &\frac{-\Sigma_{34}}{1-\Sigma_{34}^{2}-\Sigma_{35}^{2}} & \frac{1-\Sigma_{12}^{2}}{1-\Sigma_{12}^{2}-\Sigma_{14}^{2}}+\frac{1-\Sigma_{35}^{2}}{1-\Sigma_{34}^{2}-\Sigma_{35}^{2}}-1&\frac{\Sigma_{34}\Sigma_{35}}{1-\Sigma_{34}^{2}-\Sigma_{35}^{2}} \\
  0   & \frac{-\Sigma_{25}}{1-\Sigma_{25}^{2}}&\frac{-\Sigma_{35}}{1-\Sigma_{34}^{2}-\Sigma_{35}^{2}} &\frac{\Sigma_{34}\Sigma_{35}}{1-\Sigma_{34}^{2}-\Sigma_{35}^{2}} &\frac{1-\Sigma_{34}^{2}}{1-\Sigma_{34}^{2}-\Sigma_{35}^{2}}+\frac{1}{1-\Sigma_{25}^{2}}-1 
\end{matrix}
\right).
\end{align*}
\medskip\noindent
Similarly, using part $(ii)$ of Proposition \ref{prop:inv_det} we have
\[
\det(\Sigma^{-1})=\left[(1-\Sigma_{12}^{2}-\Sigma_{14}^{2})(1-\Sigma_{25}^{2})(1-\Sigma_{34}^{2}-\Sigma_{35}^{2})\right]^{-1}.
\]
$(b)$~ Let us apply the computation of the inverse matrix in part $(a)$ to the  following specific $\D$-partial matrix
\[
\Gamma=
\left(
\begin{matrix}
4 &  -2  &  *   &1   & *\\
  -2 &   2   & * &   * &  -1\\
   * &   *  &  3  &  1  & -1\\
  1  &  *  &1  &  1   & *\\
  *  & -1  & -1&    *   &1
  \end{matrix}
\right).
\]
Using the completion process in Proposition \ref{prop:completion_in_PDD} one can check that the completion of $\Gamma$ in $\mathrm{PD}_{\D}$ is given as
\[
\Sigma=
\left(
\begin{matrix}
 4 &  -2  &  0    &1   & 1\\
  -2 &   2   & 1 &   0 &  -1\\
   0 &   1  &  3  &  1  & -1\\
  1  &  0    &1  &  1   & 0\\
  1  & -1  & -1&    0    &1
 \end{matrix}
\right).
\]
From this we obtain
\begin{equation}\label{eq:Sigma_inv}
\Sigma^{-1}=
\left(
\begin{matrix}
 1   & 1   & 0  & -1  &  0\\
 1   & 2  &  0  & -1   & 1\\
 0   & 0   & 1  & -1   & 1\\
   -1 &  -1  & -1   & 3  & -1\\
 0   & 1   & 1 &  -1 &   3

\end{matrix}
\right).
\end{equation}
However, without completing  $\Sigma$ and with less computation, we can compute $\Sigma^{-1}$ using the results in Proposition \ref{prop:inv_det}. For this, we apply part $(a)$ above. First let $D=\mathrm{diag}( 4,2,3,1,1)$ and $\Gamma_{0}=D^{-\frac{1}{2}}\Gamma D^{-\frac{1}{2}}$. Thus
\[
\Gamma_{0}=
\left(
\begin{matrix}
  1& -0.71&  *&0.50&  *\\
 -0.71&  1&*& *& -0.71\\
*& *&  1&0.58& -0.58\\
0.50&  *&  0.58 &1& *\\
* &-0.71& -0.58 &*&1
 \end{matrix}
 \right).
\]
If $\Sigma_{0}$ denotes  the completion of $\Gamma_{0}$  in $\mathrm{PD}_{\D}$, then by using part $(a)$ we obtain
\[
\Sigma_{0}^{-1}=
\left(
\begin{matrix}
4 &2.83 & 0&-2 & 0\\
2.83 & 4 &  0&-1.41 & 1.41\\
 0&0 &  3&-1.73  &1.73\\
-2 &-1.41& -1.73 & 3 &-1\\
 0&1.41&  1.73& -1  &3\\
\end{matrix}
\right).
\]
From this we can obtain $\Sigma^{-1}=D^{-\frac{1}{2}}\Sigma_{0}^{-1}D^{-\frac{1}{2}}$, which as expected turns out to be same matrix as the one given in Equation \eqref{eq:Sigma_inv}.
\end{Ex}
Example  \ref{ex:inv_det} demonstrates that the inverse and the determinant of the completion of a partial matrix  $\Gamma$ in $\mathrm{PD}_{\D}$ can be computed using the expressions given in Proposition \ref{prop:inv_det}, and generally, avoid recourse to the whole completion process. This is especially so when $\D$ is a perfect DAG. Since in this case $\Sigma_{\fa(i)}$, and consequently  $\Sigma_{\pa(i)}$, are already blocks of the partial matrix $\Gamma$, the computations can be carried out without recourse to the completion process in Proposition \ref{prop:completion_in_PDD}. This fact about perfect DAGs can also be deduced from their relationship to undirected decomposable graphs (see Lemma 5.5 in \cite{Lauritzen1996}).

\vspace{0.3cm}

\noindent {\it Acknowledgments:} Ben-David was supported in part by a seed grant from the Cardiovascular Institute, Stanford University School of Medicine and by the National Science Foundation under Grant No. DMS-CMG-1025465. Rajaratnam was supported in part by the National Science Foundation under Grant Nos. DMS-0906392, DMS-CMG-1025465, AGS-1003823, DMS-1106642 and grants NSA H98230-11-1-0194, DARPA-YFA N66001-11-1-4131, and SUWIEVP10-SUFSC10-SMSCVISG0906. 

\vspace{0.3cm}


\bibliographystyle{plain}








\end{document}